
\documentclass[11pt,oneside]{amsart}   	
\allowdisplaybreaks
\usepackage{geometry} 

\usepackage{longtable}
\usepackage{color}
\usepackage{colortbl}
\definecolor{TableGray}{gray}{0.95}

\usepackage{amsmath, amssymb}
\geometry{a4paper, bmargin=3cm, tmargin=3cm}                   		

\usepackage[unicode=true,pdfusetitle,
 bookmarks=true,bookmarksnumbered=false,bookmarksopen=false,
 breaklinks=false,pdfborder={0 0 0},pdfborderstyle={},backref=false,colorlinks=true]
 {hyperref}
\hypersetup{
 pdfencoding=auto, psdextra}

\usepackage[ruled,linesnumbered]{algorithm2e}
\usepackage{siunitx}

\newtheorem{theorem}{\bf Theorem}[section]
\newtheorem{proposition}[theorem]{\bf Proposition}
\newtheorem{lemma}[theorem]{\bf Lemma}
\newtheorem{corollary}[theorem]{\bf Corollary}

\theoremstyle{definition}
\newtheorem{example}[theorem]{\bf Example}
\newtheorem{definition}[theorem]{\bf Definition}
\newtheorem{remark}[theorem]{\bf Remark}

\DeclareMathOperator{\supp}{supp}

\DeclareMathOperator{\Aut}{Aut}
\DeclareMathOperator{\Orb}{Orb}

\newcommand{\bdot}{\boldsymbol{\cdot}}
\def\length{\ell}
\def\cay{\mathrm{Cay}}
\def\diam{\mathrm{diam}}
\def\d#1{\mathsf{d}(#1)}
\def\dstar#1{\mathsf{d}^*(#1)}
\def\D#1{\mathsf{D}(#1)}
\def\GD#1{\mathsf{GD}(#1)}
\def\ddiam#1{\mathsf{dcd}(#1)}
\def\ddiamstar#1{\mathsf{dcd}^*(#1)}

\title{The directed Cayley diameter and the Davenport constant }

\author[R\'eka Andr\'as]{R\'eka Andr\'as}
\address{E\"otv\"os Lor\'and University, Faculty of Informatics, H-1117 Budapest, Pázmány P. sny 1/C., Hungary}
\email{andrasreka96@gmail.com}
\author[K\'alm\'an Cziszter]{K\'alm\'an Cziszter}
\address{Budapest Business University, Markó u. 29-31, 1055 Budapest, Hungary, ORCID iD: https://orcid.org/0000-0002-8034-836X}
\email{cziszter.kalman@gmail.com} 
\author[M\'aty\'as Domokos]
{M\'aty\'as Domokos}
\address{HUN-REN Alfr\'ed R\'enyi Institute of Mathematics,
Re\'altanoda utca 13-15, 1053 Budapest, Hungary,
ORCID iD: https://orcid.org/0000-0002-0189-8831}
\email{domokos.matyas@renyi.hu}
\author[Istv\'an Sz\"oll\H osi]{Istv\'an Sz\"oll\H osi} 
\address{Babe\c s-Bolyai University, Faculty of Mathematics and Computer Science, str. M. Kog\u alniceanu, nr. 1, 400084, Cluj-Napoca, Romania, ORCID iD: https://orcid.org/0000-0001-8632-9760} 
\email{szollosi@gmail.com}
\begin{document}
\thanks{Partially supported by the Hungarian National Research, Development and Innovation Office,  NKFIH K 138828,  K 132002.}
\subjclass[2010]{Primary 20D60; Secondary 20M14, 20K01}
\keywords{Cayley graph, directed diameter of a group, monoid of product-one sequences, Davenport constant}

\begin{abstract} 
The directed Cayley diameter of a finite group is investigated in terms of the 
monoid of product-one sequences over the group, via the new notion of directed geodesic atoms. 
Two quantities associated to the set of directed geodesic atoms provide lower and upper bounds for the directed Cayley diameter. 
An algorithm for computing 
the directed geodesic atoms is implemented in GAP, 
and is applied to determine the above mentioned quantities for all non-abelian groups of order at most $42$, and for the alternating group of degree $5$.  
Furthermore, the small and large Davenport constants of all these groups are computed (excepting the large Davenport constant for $A_5$), extending thereby the formerly obtained results on the groups of order less than $32$. 
Along the way the directed Cayley diameter of a finite abelian group is 
expressed in terms of its invariants. 
\end{abstract} 

\maketitle

\section{Introduction} \label{sec:intro} 

It was proved recently in \cite{geroldinger-oh} that if the monoid of product-one sequences 
over a finite group $G_1$ is isomorphic to the monoid of product-one sequences over a finite group $G_2$, then the group $G_1$ is isomorphic to the group $G_2$. 
It is therefore not surprising that much information on a finite group $G$ can be 
read off from the set of atoms in the monoid of product-one sequences over 
$G$. The present paper gives further instances of this principle. We shall 
relate the problem of computing the directed Cayley diameter of a finite group to the study of 
the atoms in the monoid of product-one sequences 
over the group. 
Note that the directed Cayley diameter is an obvious upper bound for the undirected Cayley diameter (referred to simply as the ``Cayley diameter''). Conversely, a general explicit bound for the directed Cayley diameter in terms of the Cayley diameter is proved in \cite{babai}. 
The Cayley diameter of finite groups has received much attention in the literature. In particular, several prominent papers are motivated by the conjecture from \cite{babai-seress} stating 
that the Cayley diameter of a finite non-abelian simple group $G$ is bounded above by 
$(\log|G|)^c$ with some absolute constant 
$c$, see for example \cite{helfgott-seress}, 
\cite{breuillard-green-guralnick-tao}, \cite{pyber-szabo}, \cite{halasi-maroti-pyber-qiao}, 
\cite{eberhard-jezernik}. 
Other areas where Cayley graphs have relevance are surveyed in \cite{konstantinova}.  

Recall that the directed Cayley diameter of a finite group $G$ is the maximal diameter of its Cayley graphs associated with the various irredundant generating systems of $G$. 
The starting point of this paper is the observation \cite[Proposition 5.6]{cz-d-sz}, asserting that the large Davenport constant $\D{G}$ of a finite group $G$ (i.e. the maximal length of an atom in 
the monoid of product-one sequences over $G$) is strictly greater than the diameter of any of the Cayley digraphs of $G$. 
In fact this statement can be strengthened by saying that $\D{G}$ is strictly greater than the directed Cayley diameter of any subgroup of $G$ (see 
Corollary~\ref{cor:D>max diamCay(H)}). 
By our Lemma~\ref{lemma:cayley diam from geodesic atoms}, the maximum of the directed Cayley diameters of the subgroups of $G$ is one less than $\GD{G}$, the \emph{directed geodesic Davenport constant of} $G$ 
introduced in Definition~\ref{def:GD(G)} as the maximal length of a so-called \emph{directed geodesic atom} (see Definition~\ref{def:geodesic atom}) in the monoid 
$\mathcal{B}(G)$ of product-one sequences over $G$. 
A characterisation of the directed geodesic atoms, 
given in Section~\ref{sec:geodesic atoms}, 
together with Theorem~\ref{thm:equivalent}, offers an algorithm to build up the set of directed geodesic atoms, using a bottom-up approach (see Section~\ref{sec:algorithm}). 
This algorithm is related to the algorithm given in \cite[Section 6]{cz-d-sz}, computing $\D{G}$ and all atoms in 
$\mathcal{B}(G)$. Finally, once a list of directed geodesic atoms is produced, one can read off bounds for 
the directed Cayley diameter of $G$ 
(see Lemma~\ref{lemma:cayley diam from geodesic atoms}). 

After recalling concepts related to  Cayley graphs and the monoid of product-one sequences in Section~\ref{sec:prel}, 
we turn to finite abelian groups and determine 
their directed Cayley diameter in 
Theorem~\ref{thm:diam=d*}. 
The materials on the relation between the directed Cayley diameter and the atoms in the monoid of product-one sequences as well as the algorithms are contained in  
Sections~\ref{sec:Davenport-Cayley}, \ref{sec:geodesic atoms}, \ref{sec:characterization}, and \ref{sec:algorithm}. 
The results of our computer calculations 
are presented in Section~\ref{sec:calculations}. 
The small and large Davenport constants of the non-abelian groups of order less than $32$ were computed in \cite{cz-d-sz}. Here 
we give the Davenport constants of the remaining non-abelian groups of order at most $42$ as well, and for all non-abelian 
groups $G$ with $|G|\le 42$ we compute  also the directed geodesic Davenport constant of $G$ (which in several cases yields 
the directed Cayley diameter of $G$). 

Finally, in Section~\ref{sec:survey} we review some recent results on the Davenport constants of finite non-abelian 
groups. 

We note that in a follow-up paper we shall adapt the results of this paper to  the problem of computing the (undirected) Cayley diameter of a finite group.

\section{Preliminaries}\label{sec:prel} 

\subsection{Directed Cayley graphs}
Let $G$ be a finite group (written multiplicatively unless explicitly stated otherwise) with identity element $1_G$ and let $B\subseteq G$ be a generating system of $G$.  
The \emph{directed Cayley graph} $\cay(G,B)$ of $G$ with respect to the generating system $B$ 
is the digraph whose vertices are labeled by the elements of $G$, and 
the pair $(g,h)$ is a directed edge in $\cay(G,B)$ if there exists an element 
$b\in B$ with $h=gb$. 
We say that a directed path $g_0,g_1,\dots,g_d$ in $\cay(G,B)$ is
\emph{geodesic} if its length $d$ is minimal among the lengths of the directed paths in $\cay(G,B)$ from $g_0$ to $g_d$. 
The \emph{diameter} $\diam(\cay(G,B))$ of $\cay(G,B)$ is 
the maximal possible length of a geodesic path in $\cay(G,B)$. As the action of $G$ on itself via left multiplication embeds $G$ into the automorphism group of the digraph $\cay(G,B)$, $\diam(\cay(G,B))$ is the same as the maximal length of a geodesic path in $\cay(G,B)$ starting at $1_G$. 
The \emph{directed Cayley diameter} of $G$ is defined as 
\[\ddiam{G}:=\max\{\diam(\cay(G,B))\mid B\subseteq G\mbox{ generates } G\}.\] 
The number $\ddiam{G}$ can be introduced also without a reference to Cayley graphs. Namely, for a subset $B\subseteq G$ and an element $g\in \langle B\rangle$ 
(the subgroup of $G$ generated by $B$) 
we set 
\[\length_B(g):=\min\{n\in\mathbb{Z}_{\ge 0}\mid \exists 
b:\{1,\dots,n\}\to B \text{ with }g=b(1)\cdots b(n)\}\] 
(with the convention $\length_B(1_G)=0$). 
Clearly, 
 a directed path $1_G=g_0,g_1,\dots,g_d=g$ from $1_G$ to $g$ in $\cay(G,B)$ is \emph{geodesic} if 
and only if $d=\length_B(g)$. 
Therefore 
\begin{equation}\label{eq:diam(cay(G,B))}\diam(\cay(G,B))=\max\{\length_B(g) \mid g\in G \},\end{equation} 
and  
\begin{equation}\label{eq:ddim-length} \ddiam{G}=\max\{\length_B(g) \mid g\in G, \  B\subseteq G,\  \langle B\rangle=G \}.\end{equation}

\subsection{The monoid of product-one sequences}

Denote by $(\mathcal{F}(G),\bdot)$ the free abelian monoid 
with basis $G$. Its element will be called a \emph{sequence over $G$}, because 
the element 
\[S=g_1\bdot\dots\bdot g_\ell\in\mathcal{F}(G)\] 
can be thought of as the sequence  $g_1,\dots,g_\ell$, where repetition is allowed and the order of terms is disregarded. The number $\ell$ is called the \emph{length} of $S$, denoted by $|S|$. 
Note that the product of $g_1\bdot \dots \bdot g_\ell$ and $h_1\bdot\dots\bdot h_k$ in 
the monoid $\mathcal{F}(G)$ is $g_1\bdot \dots \bdot g_\ell\bdot h_1\bdot\dots\bdot h_k$ 
(concatenation of sequences). 
For $S=g_1\bdot\dots\bdot g_\ell\in\mathcal{F}(G)$ we shall use the notation 
\[\mathsf{v}_g(S):=|\{i\in\{1,\dots,\ell\}\mid g_i=g\}|.\] 
For $g\in G$ and a positive integer $n$ we shall write $g^{[n]}$ for the sequence 
$\underbrace{g\bdot \dots\bdot g}_n$. 
Define the \emph{support} of $S\in \mathcal{F}(G)$ as 
\[\mathrm{supp}(S):=\{g\in G\mid  \mathsf{v}_g(S)>0\}.\] 
For $g\in \mathrm{supp}(S)$ denote by $S\bdot g^{[-1]}$ the element of 
$\mathcal{F}(G)$ defined by the equality
\[S=g\bdot (S\bdot g^{[-1]}).\]  

The sequence $g_1\bdot \dots \bdot g_\ell$ is a \emph{product-one sequence} if 
there exists a permutation $\pi\in \mathrm{Sym}_\ell$ such that 
$g_{\pi(1)}\cdots g_{\pi(\ell)}=1_G\in G$. Product-one sequences form a submonoid 
$\mathcal{B}(G)$ of $\mathcal{F}(G)$. 
An element $S\in \mathcal{B}(G)$ is called an \emph{atom} if it can not be written 
as $S=T\bdot U\in \mathcal{F}(G)$ with $T,U\in \mathcal{B}(G)$ and $|T|>0$, $|U|>0$.  Write $\mathcal{A}(G)$ for the set of atoms in $\mathcal{B}(G)$. 
The maximal length of an atom in the monoid $\mathcal{B}(G)$ is denoted by $\D{G}$ and is called the 
\emph{large Davenport constant of $G$}. 
A sequence $S=g_1\bdot \dots \bdot g_\ell\in \mathcal{F}(G)$ is \emph{product-one free} if $S$ can not be written as $S=T\bdot U\in \mathcal{F}(G)$ where $T\in \mathcal{B}(G)$ and $|T|>0$. 
The maximal length of a product-one free sequence is denoted by $\d{G}$ and is 
called the \emph{small Davenport constant of $G$}. 
The small and large Davenport constants are related by the following inequality: 
$\D{G}\ge 1+\d{G}$. 
 
\section{Abelian groups} \label{sec:abelian} 

When $G$ is abelian, we shall use the additive notation, and write $0_G$ for the zero element of $G$. A product-one sequence 
$S=g_1\bdot \dots \bdot g_\ell$ 
in this case is rather called  a \emph{zero-sum sequence}. It means that  
$g_1+\cdots+g_\ell=0_G\in G$.  
A product-one free sequence is called a zero-sum free sequence in this case. 
So $S=g_1\bdot \dots \bdot g_\ell$ is \emph{zero-sum free} if there is no 
non-empty subset $\{i_1,\dots,i_k\}\subseteq\{1,\dots,\ell\}$ with 
$g_{i_1}+\cdots+g_{i_k}=0_G\in G$. 
The small and large Davenport constants of a finite abelian group $G$ 
are related by the equality $\D{G}=\d{G}+1$. 

\begin{proposition}\label{prop:d>=diam}
For a finite abelian group $G$ we have 
$\d{G}\ge \ddiam{G}$. 
\end{proposition}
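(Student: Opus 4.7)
The plan is to produce, from any witness to $\ddiam{G}$, a zero-sum free sequence of the same length. By equation~\eqref{eq:ddim-length} there exist a generating set $B \subseteq G$ and an element $g \in G$ with $\length_B(g) = \ddiam{G}$. If $\ddiam{G} = 0$ the inequality is trivial, so I may assume $d := \ddiam{G} \geq 1$ and fix a shortest expression $g = b_1 + \cdots + b_d$ with $b_1, \dots, b_d \in B$. The key claim is that $S := b_1 \bdot b_2 \bdot \cdots \bdot b_d \in \mathcal{F}(G)$ is zero-sum free; once this is established, the length $d$ of $S$ satisfies $d \leq \d{G}$, which is exactly the desired inequality.

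To verify the claim, I would argue by contradiction. Suppose some non-empty subset $I \subseteq \{1,\dots,d\}$ satisfies $\sum_{i \in I} b_i = 0_G$. Because $G$ is abelian, summands may be permuted freely, so
\[ g = \sum_{i=1}^{d} b_i = \sum_{i \in I} b_i + \sum_{i \notin I} b_i = \sum_{i \notin I} b_i. \]
This exhibits $g$ as a sum of only $d - |I| < d$ elements of $B$, contradicting the minimality of $d = \length_B(g)$. Hence no such $I$ exists, and $S$ is zero-sum free.

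There is no real obstacle to this argument: the only substantive ingredient is the commutativity of $G$, which allows a zero-summing sub-collection to be cancelled from \emph{any} representation of $g$, independently of the order in which the summands were originally written. This is precisely why the analogous inequality becomes subtle in the non-abelian setting, where one cannot simply excise a product-one sub-sequence from a minimal factorisation without disturbing the remaining product — and it explains the need for the more refined notion of directed geodesic atoms introduced later in the paper.
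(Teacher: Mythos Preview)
Your proof is correct and follows essentially the same argument as the paper's own proof: both take a witness $g=b_1+\cdots+b_d$ to $\ddiam{G}$ and show that a zero-sum subsequence would yield a shorter expression for $g$, contradicting minimality. The only cosmetic difference is that the paper phrases the contradiction via renumbering a tail segment while you use subset notation, and the paper records that $B$ may be taken irredundant (which is not actually needed for the argument).
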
 

\begin{proof}
Write $d:=\ddiam{G}$. If $d=0$, then 
$G$ is the one-element group, and 
both $\d{G}$ and $\ddiam{G}$ are equal to zero. From now on we assume that 
$d>0$. 
By \eqref{eq:ddim-length} there exists an equality $g=g_1+\cdots+g_d$, with 
$\{g_1,\dots,g_d\}\subseteq B$, where $B$ is an irredundant generating system of $G$ 
(i.e. $B$ generates $G$, but no proper subset of $B$ generates $G$), and 
$d=\length_B(g)$. Then $g\neq 0_G$, 
and $g_1\bdot \dots \bdot g_d$ is a zero-sum free sequence, and so $\d{G}\ge d$. 
Indeed, otherwise after a possible renumbering we have that for some $k<d$ we have $g_{k+1}+\cdots+g_d=0_G$, implying that $g_1+\cdots+g_k=g$. Thus 
$\length_B(g)\le k<d$, contrary to the assumption that $\length_B(g)=d$. 
\end{proof}

For an abelian group $G=C_{n_1}\oplus \cdots\oplus C_{n_r}$  with 
$2\le n_1\mid\dots\mid n_r$, set 
\[\dstar{G}:=\sum_{i=1}^r(n_i-1).\] 
By  \cite[Lemma 4.1]{Gr-Ma-Or09} (see also \cite[Exercise 1.6]{Gr13a}) we have 
\begin{equation}\label{eq:d*(G)}
    \mathsf d^*(G)\ge \mathsf d^*(H)+\mathsf d^*(G/H) \text{ for any subgroup }H \text{ of }G. 
\end{equation}
In particular, 
\begin{equation}\label{eq:d*(G/H)}
\mathsf d^*(H)+|G:H|-1\le \dstar{G} \quad \mbox{ when }G/H \mbox{ is cyclic}
\end{equation}
(see also \cite[Lemma 3.2]{domokos} for an alternative proof).

Note that $\d{G}\ge \dstar{G}$ for all $G$. 
Therefore Proposition~\ref{prop:d>=diam} follows from the sharper  
Theorem~\ref{thm:diam=d*} below. 

\begin{theorem}\label{thm:diam=d*} 
For any finite abelian group $G$ we have the equality 
\[\ddiam{G}=\dstar{G}.\]  
\end{theorem}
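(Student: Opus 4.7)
The plan is to establish both inequalities $\ddiam{G}\ge \dstar{G}$ and $\ddiam{G}\le \dstar{G}$: a direct construction for the lower bound, and an induction on $|G|$ for the upper bound, fed by the subgroup inequality \eqref{eq:d*(G/H)}.

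For the lower bound I would write $G=C_{n_1}\oplus\cdots\oplus C_{n_r}$ with $2\le n_1\mid\cdots\mid n_r$ and take the generating set $B=\{e_1,\ldots,e_r\}$, where $e_i$ is a standard generator of $C_{n_i}$. Removing any $e_i$ loses the summand $C_{n_i}$, so $B$ is irredundant. For $g:=\sum_{i=1}^r(n_i-1)e_i$, every expression $g=\sum a_ie_i$ with $a_i\in\mathbb{Z}_{\ge 0}$ forces $a_i\equiv n_i-1\pmod{n_i}$, hence $a_i\ge n_i-1$; this yields $\length_B(g)=\sum(n_i-1)=\dstar{G}$, and therefore $\ddiam{G}\ge\diam(\cay(G,B))\ge\dstar{G}$.

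For the upper bound I would induct on $|G|$, with the trivial group as base case. Let $B$ be an irredundant generating set of a nontrivial $G$, let $g\in G$, and pick any $b\in B$. Setting $H:=\langle B\setminus\{b\}\rangle$, irredundancy makes $H$ a proper subgroup, and since the image of $B$ in $G/H$ reduces to $\{b+H\}$, the quotient $G/H$ is cyclic of order $m:=|G:H|$ generated by $b+H$. I would write $g+H=k(b+H)$ with $0\le k\le m-1$ and set $h:=g-kb\in H$; the identity $g=kb+h$ then gives
\[\length_B(g)\le k+\length_{B\setminus\{b\}}(h).\]
Since $B\setminus\{b\}$ is irredundant in $H$ (any redundancy would lift to one in $B$), the inductive hypothesis yields $\length_{B\setminus\{b\}}(h)\le\ddiam{H}\le\dstar{H}$. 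Combined with $k\le m-1$ and \eqref{eq:d*(G/H)}, this produces
\[\length_B(g)\le\dstar{H}+(m-1)\le\dstar{G},\]
and taking the maximum over $g$ and $B$ closes the induction.

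The main point to get right is the reduction step: once one recognises that removing any element from an irredundant generating system leaves an irredundant generating system of a proper subgroup $H$ with $G/H$ cyclic, the argument essentially lifts the elementary inequality \eqref{eq:d*(G/H)} from $\dstar{}$ to $\ddiam{}$, and no further technical obstacle arises.
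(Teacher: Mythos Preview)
Your proof is correct and follows essentially the same approach as the paper: the explicit lower bound via the standard generators $\{e_1,\ldots,e_r\}$ and the target element $\sum(n_i-1)e_i$, and the upper bound by induction on $|G|$ via removing one generator $b$, passing to the proper subgroup $H=\langle B\setminus\{b\}\rangle$, and applying \eqref{eq:d*(G/H)} to the cyclic quotient $G/H$. The only difference is cosmetic: your remark that $B\setminus\{b\}$ remains irredundant in $H$ is true but not actually needed, since the inductive bound $\ddiam{H}\le\dstar{H}$ already covers all generating systems of $H$.
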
 

\begin{proof} 
For $i=1,\dots,r$ denote by $e_i$ the generator of the $i$th direct summand $C_{n_i}$ of $G=C_{n_1}\oplus \cdots \oplus C_{n_r}$, where $2\le n_1\mid\dots\mid n_r$. Consider the generating system $B:=\{e_1,\dots,e_r\}$ of $G$. 
Then $\length_B(\sum_{i=1}^r(n_i-1)e_i)=\sum_{i=1}^r(n_i-1)$, implying 
the inequality $\ddiam{G}\ge \sum_{i=1}^r(n_i-1)=\dstar{G}$. 

We prove the reverse inequality $\ddiam{G}\le \dstar{G}$ by induction on $|G|$. 
For $|G|=1,2,3$ the statement is obvious. 
Suppose $|G|\ge 4$, and take a generating system $B$ of $G$ such that 
$d:=\ddiam{G}=\diam(\cay(G,B))$. 
For any subset $C\subseteq B$ 
it is obvious that  
$\diam(\cay(G,C))\ge \diam(\cay(G,B))$, therefore by omitting redundant elements from $B$, 
we may assume that $B$ is irredundant. 
Pick some $b\in B$. Then $B\setminus \{b\}$ generates a proper subgroup $H$ of 
$G$, therefore $\ddiam{H}\le \dstar{H}$ by the induction hypothesis.  
Now take an element $g\in G$ with $\length_B(g)=d$. 
Denote by $\bar g$ the image of $g$ in the factor group $G/H$. The factor group 
$G/H$ is generated by the image $\bar b$ of $b$. So there exists a 
$k\in \{0,1,\dots,|G:H|-1\}$ such that $\bar g =k\bar b$, or equivalently, 
$g-kb\in H$. 
Now 
\[\length_{B\setminus \{b\}}(g-kb)\le\diam(\cay(H,B\setminus \{b\}))
\le \ddiam{H}\le \dstar{H}, \]
hence 
\[d=\length_B(g)
\le \length_{B\setminus \{b\}}(g-kb)+k
\le \dstar{H}+k
\le  \dstar{H}+|G:H|-1.\] 
Finally, by \eqref{eq:d*(G/H)} we conclude 
$\ddiam{G}=d\le \dstar{G}$. 
\end{proof} 

\begin{remark} \begin{itemize} 
\item[(i)] An easy short argument yielding an upper bound for 
$\ddiam{G}$ in the case when $G$ is the direct sum of prime order groups is given in \cite[Lemma 5.2]{babai-seress:2}. 
\item[(ii)] The undirected Cayley diameter of a finite abelian group is determined in \cite[Lemma 3.3]{wilson}. For  $G=C_{n_1}\oplus \cdots\oplus C_{n_r}$  with 
$2\le n_1\mid\dots\mid n_r$, it is 
$\sum_{i=1}^r\lfloor n_i/2\rfloor$. 
\end{itemize} 
\end{remark} 

It is well known that the equality $\d{G}=\dstar{G}$ holds for 
abelian groups of rank at most $2$, for abelian $p$-groups, and for several other 
infinite classes of abelian groups, and this equality is conjectured to hold for 
abelian groups of rank $3$ and for the groups of the form $C_n\oplus\cdots\oplus C_n$.  
Therefore  
for a notable class of abelian groups the inequality in the statement of 
Proposition~\ref{prop:d>=diam} holds with equality.

\begin{example} \label{example:d>d*} 
Consider the group $G:=C_3\oplus C_3\oplus C_3\oplus C_6$ with generators 
$e_i$ $(i=1,\dots,4)$ such that $\mathrm{ord}(e_1)=\mathrm{ord}(e_2)=\mathrm{ord}(e_3)=3$ and $\mathrm{ord}(e_4)=6$.  
Set $g_1:=-e_1+e_2+e_3+e_4$, 
$g_2:=e_1-e_2+e_3+e_4$, 
$g_3:=e_1+e_2-e_3+e_4$, 
$g_4:=-e_1+e_2+e_3-e_4$, 
$g_5=e_1+e_2+e_3+e_4$, 
$g_6:=e_2-e_3-e_4$, 
$g_7:=-e_2+e_3-e_4$. 
It is shown (as a special case of a more general statement) in 
\cite{geroldinger-schneider} that the sequence 
$g_1^{[2]}\bdot g_2^{[2]}\bdot g_3^{[2]}\bdot g_4^{[2]}\bdot g_5^{[2]}\bdot g_6 \bdot g_7$ is zero-sum free. 
As this sequence has length $12$, we conclude that 
$\d{G}\ge 12 > 11=\dstar{G}$. 
 Any ordering of the elements in this sequence gives a path of length $12$ from $0$ to 
 $g:=2(g_1+g_2+g_3+g_4+g_5)+g_6+g_7$ in the Cayley graph 
 $\cay(G,\{g_i\mid i=1,\dots,7\})$. 
 Such a path can not be a geodesic path from $0$ to $g$ in $\cay(G,\{g_i\mid i=1,\dots,7\})$ 
 by Theorem~\ref{thm:diam=d*}.  
 Indeed, the equality 
 \[2(g_1+g_2+g_3+g_4+g_5)+g_6+g_7=4g_1+g_2+g_3+2g_4,\] 
 shows that there exists a path of length $4+1+1+2=8$ from $0_G$ to $g$ 
 in  $\cay(G,\{g_i\mid i=1,\dots,7\})$, hence $\length_{\{g_i\mid i=1,\dots,7\}}(g)\le 8$. 
 \end{example}

\section{The relation between the Davenport constant and the directed Cayley diameter} \label{sec:Davenport-Cayley} 

In view of Proposition~\ref{prop:d>=diam} it is natural to pose the following question: does the inequality $\ddiam{G}\le \d{G}$ hold for all non-abelian finite groups $G$?

The computer calculations presented in Section~\ref{sec:calculations} answer 
this question in the negative, see for example the group with GAP-identifier \texttt{[16,3]} in the table in 
Section~\ref{sec:calculations}.  
What we know instead is that the directed Cayley diameter can be bounded in terms of the 
large Davenport constant, since 
in \cite[Proposition 5.6]{cz-d-sz} the following inequality was pointed out: 

\begin{proposition}\label{prop:D>diamCay} 
We have $\D{G}>\ddiam{G}$ 
for all finite groups $G$. 
\end{proposition}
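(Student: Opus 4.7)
The plan is to produce an atom of length exactly $\ddiam{G}+1$ directly from an element that realises the directed Cayley diameter. First I would fix a (not necessarily irredundant) generating system $B$ of $G$ with $\diam(\cay(G,B)) = \ddiam{G}$ and choose $g \in G$ with $\length_B(g) = d := \ddiam{G}$. Writing $g = b_1 \cdots b_d$ with each $b_i \in B$, the natural candidate for an atom is
\[S \;:=\; b_1 \bdot b_2 \bdot \cdots \bdot b_d \bdot g^{-1} \;\in\; \mathcal{F}(G).\]
The product of these terms in the displayed order is $g \cdot g^{-1} = 1_G$, so $S \in \mathcal{B}(G)$ and $|S| = d+1$.

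The bulk of the argument is to show that $S$ is an atom of $\mathcal{B}(G)$. Assume for contradiction that $S = T \bdot U$ with $T, U \in \mathcal{B}(G)$ and $|T|, |U| > 0$. At least one of $T, U$ contains the term $g^{-1}$ in its support; after swapping if necessary, I may assume $g^{-1} \in \supp(T)$, so that $T = g^{-1} \bdot T'$ where $T'$ is a sub-sequence of $b_1 \bdot \cdots \bdot b_d$ (picking out one distinguished copy of $g^{-1}$; any additional copies of $g^{-1}$ that may coincide with some $b_j$ are just treated as ordinary terms of $T'$). Since $T$ is product-one, some ordering of its terms multiplies to $1_G$; the standard cyclic rotation remark (if $x_1 \cdots x_n = 1_G$ then also $x_n x_1 \cdots x_{n-1} = 1_G$) lets me assume the distinguished copy of $g^{-1}$ sits in the first position. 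Hence there is an ordering $c_1, \dots, c_m$ of the terms of $T'$ (with $c_i \in B$) such that $g^{-1} \cdot c_1 \cdots c_m = 1_G$, giving
\[g \;=\; c_1 c_2 \cdots c_m, \qquad \length_B(g) \le m = |T'| = |T| - 1.\]
But $|U| \ge 1$ forces $|T| \le d$, so $\length_B(g) \le d - 1$, contradicting the choice of $g$. Therefore $S$ is an atom, whence $\D{G} \ge |S| = d + 1 > d = \ddiam{G}$.

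The only subtle point I expect is the case where $g^{-1}$ coincides with some $b_j$, so that $g^{-1}$ appears in $S$ with multiplicity greater than one; but this is handled cleanly by the device of distinguishing one fixed copy of $g^{-1}$ and grouping the rest into $T'$, so no real difficulty arises. Everything else is bookkeeping in the free abelian monoid $\mathcal{F}(G)$ together with the cyclic-rotation trick for product-one sequences.
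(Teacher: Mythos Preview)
Your argument is correct and is essentially the same as the paper's: the paper states this as a consequence of Lemma~\ref{lemma:geodesic atoms}, which asserts exactly that $b_1\bdot\dots\bdot b_d\bdot (b_1\cdots b_d)^{-1}$ is an atom whenever $\length_{\supp(b_1\bdot\dots\bdot b_d)}(b_1\cdots b_d)=d$, and proves it by the identical factorisation/contradiction argument you give (placing the distinguished $g^{-1}$ at one end and reading off a shorter word for $g$). The only cosmetic difference is that the paper phrases the lemma with $\supp(b_1\bdot\dots\bdot b_d)$ in place of your ambient generating set $B$, which makes it reusable for the subgroup strengthening in Corollary~\ref{cor:D>max diamCay(H)}.
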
 

Proposition~\ref{prop:D>diamCay} follows from the following observation: 

\begin{lemma}\label{lemma:geodesic atoms} Suppose that 
$\length_{\supp(b_1\bdot \dots \bdot b_d)}(b_1\cdots b_d)=d$ for a sequence $b_1\bdot \dots \bdot b_d\in \mathcal{F}(G)$. 
Then setting $b_{d+1}:=(b_1\cdots b_d)^{-1}$, we have that $b_1\bdot \dots \bdot b_d\bdot b_{d+1}$  is an atom in $\mathcal{B}(G)$.   
\end{lemma}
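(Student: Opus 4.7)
The plan is to argue by contradiction. First observe that $S := b_1 \bdot \dots \bdot b_d \bdot b_{d+1}$ is indeed a product-one sequence, because the specific ordering $b_1, \dots, b_d, b_{d+1}$ gives the product $(b_1 \cdots b_d) \cdot (b_1 \cdots b_d)^{-1} = 1_G$. So $S \in \mathcal{B}(G)$, and the only question is atomicity.

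Suppose for contradiction that $S = T \bdot U$ with $T, U \in \mathcal{B}(G)$ and $|T|, |U| > 0$. Thinking of $S$ as carrying $d+1$ labeled terms, the splitting corresponds to a partition of $\{1, \dots, d+1\}$ into non-empty subsets $I$ and $J$ with $T = \prod_{i \in I}^{\bdot} b_i$ and $U = \prod_{j \in J}^{\bdot} b_j$, and both are product-one sequences. After swapping the roles of $T$ and $U$ if necessary, we may assume $d+1 \in J$, so that $I \subseteq \{1, \dots, d\}$ with $1 \le |I| \le d$.

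The key step is to exploit the product-one property of $U$. Since $U$ contains $b_{d+1} = (b_1 \cdots b_d)^{-1}$ together with the terms $\{b_j : j \in J \setminus \{d+1\}\}$, there is an ordering of the elements of $U$ multiplying to $1_G$. Moving $b_{d+1}^{-1} = b_1\cdots b_d$ to the other side, one obtains an ordering of the multiset $\{b_j : j \in J \setminus \{d+1\}\} = \{b_j : j \in \{1,\dots,d\} \setminus I\}$ whose product equals $b_1 \cdots b_d$. This expresses $b_1 \cdots b_d$ as a product of $d - |I|$ terms drawn from $\supp(b_1 \bdot \dots \bdot b_d)$, so $\length_{\supp(b_1\bdot \dots \bdot b_d)}(b_1 \cdots b_d) \le d - |I| < d$, contradicting the hypothesis.

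I do not anticipate a genuine obstacle here; the only subtlety worth spelling out is that a splitting $S = T \bdot U$ in $\mathcal{F}(G)$ really does correspond to a partition of the positions of $S$ (so that one can locate a single copy of $b_{d+1}$ in either $T$ or $U$), which relies only on the fact that $\mathcal{F}(G)$ is the free abelian monoid on $G$. Everything else is a one-line manipulation turning the product-one relation for $U$ into a shorter $\supp(S)$-factorization of $b_1 \cdots b_d$.
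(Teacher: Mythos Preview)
Your argument is correct and follows essentially the same route as the paper's proof: assume a non-trivial factorisation, locate the term $b_{d+1}$ in one factor, and use the product-one relation there (via a cyclic rotation putting $b_{d+1}$ last) to write $b_1\cdots b_d$ as a product of fewer than $d$ elements of $\supp(b_1\bdot\dots\bdot b_d)$, contradicting the hypothesis. The only cosmetic difference is that you phrase the splitting via a partition of the index set $\{1,\dots,d+1\}$, whereas the paper works directly with multiplicities; the content is identical.
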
 

\begin{proof} Suppose for contradiction  that 
$b_1\bdot \dots \bdot b_d\bdot b_{d+1}=S\bdot T$ for some $S,T\in \mathcal{B}(G)$ with $|S|>0$ and $|T|>0$. 
By symmetry we may assume that $\mathsf{v}_{b_{d+1}}(T)\ge 1$, 
so $T=c_1\bdot \dots\bdot c_m\bdot b_{d+1}$ for some $c_1,\dots,c_m\in \supp(b_1\bdot \dots \bdot b_d)$. 
Moreover, after a possible renumbering we have $c_1\cdots c_mb_{d+1}=1_G$.  
Thus $c_1\cdots c_m=b_{d+1}^{-1}=b_1\cdots b_d$, hence 
$\length_{\supp(b_1\bdot\dots\bdot b_d)}(b_{d+1}^{-1})\le 
\length_{\supp(c_1\bdot \dots \bdot c_m)}(b_{d+1}^{-1})\le m<d$. 
This contradicts the assumption 
$\length_{\supp(b_1\bdot \dots \bdot b_d)}(b_1\cdots b_d)=d$. 
\end{proof} 

Lemma~\ref{lemma:geodesic atoms} implies the following strengthening of Proposition~\ref{prop:D>diamCay}: 

\begin{corollary} 
\label{cor:D>max diamCay(H)} 
For any finite group $G$ we have the inequality  
\begin{equation}\label{eq:D>max cdc(H)}
    \D{G}>\max\{\ddiam{H}\mid H\le G\}.
    \end{equation}
\end{corollary}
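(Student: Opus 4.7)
The plan is to fix an arbitrary subgroup $H\le G$, produce an atom of $\mathcal{B}(G)$ of length $\ddiam{H}+1$, and conclude. Since the right-hand side of \eqref{eq:D>max cdc(H)} is a maximum over finitely many subgroups, it suffices to prove $\D{G}>\ddiam{H}$ for each such $H$.

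First, unpack $\ddiam{H}$ via \eqref{eq:ddim-length}: choose an irredundant generating system $B\subseteq H$ of $H$ and an element $g\in H$ with $\length_B(g)=d:=\ddiam{H}$. Write $g=b_1\cdots b_d$ with $b_1,\dots,b_d\in B$. The key observation is that this geodesic factorisation is also geodesic with respect to the smaller alphabet $\supp(b_1\bdot \dots \bdot b_d)\subseteq B$: on the one hand, restricting the alphabet can only increase the length, so
\[\length_{\supp(b_1\bdot \dots \bdot b_d)}(b_1\cdots b_d)\ge \length_B(b_1\cdots b_d)=d;\]
on the other hand, the factorisation $b_1\cdots b_d$ itself shows the reverse inequality. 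Hence $\length_{\supp(b_1\bdot \dots \bdot b_d)}(b_1\cdots b_d)=d$, which is precisely the hypothesis of Lemma~\ref{lemma:geodesic atoms}.

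Applying that lemma, $b_1\bdot \dots \bdot b_d\bdot b_{d+1}$ (with $b_{d+1}:=(b_1\cdots b_d)^{-1}$) is an atom in $\mathcal{B}(H)$. The only mild point to observe is that an atom in $\mathcal{B}(H)$ is automatically an atom in $\mathcal{B}(G)$: a nontrivial factorisation in $\mathcal{F}(G)$ involves only terms from $H$ (since all terms of the original sequence lie in $H$), so both factors would lie in $\mathcal{F}(H)$ and be product-one in $G$, hence also in $H$, contradicting atomicity in $\mathcal{B}(H)$. Therefore $\mathcal{B}(G)$ contains an atom of length $d+1$, giving $\D{G}\ge d+1>d=\ddiam{H}$, as required.

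No step here is genuinely hard; the only subtlety is the little alphabet-restriction remark that transforms a $B$-geodesic factorisation into one whose alphabet is exactly its own support, so that Lemma~\ref{lemma:geodesic atoms} applies directly. Once this is in place, the corollary is essentially a one-line consequence of the lemma together with the trivial fact that atomicity is preserved when passing from a subgroup to the ambient group.
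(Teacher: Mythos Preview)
Your proof is correct and follows essentially the same route as the paper, which simply records that the corollary is an immediate consequence of Lemma~\ref{lemma:geodesic atoms}. The only cosmetic difference is that you first obtain an atom in $\mathcal{B}(H)$ and then pass to $\mathcal{B}(G)$, whereas Lemma~\ref{lemma:geodesic atoms} applied directly to the sequence viewed in $\mathcal{F}(G)$ already yields an atom in $\mathcal{B}(G)$, making the subgroup-to-ambient-group step unnecessary.
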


\begin{remark} \label{remark:diam(H)>diam(G)}
Corollary~\ref{cor:D>max diamCay(H)} is stronger than 
Proposition~\ref{prop:D>diamCay}, since 
in some cases it happens that $\ddiam{H}>\ddiam{G}$ for a subgroup $H$ of $G$ (see Remark~\ref{remark:not monotone} and Remark~\ref{remark:monotonity}). 
\end{remark}

\section{Directed geodesic atoms and their applications}\label{sec:geodesic atoms}

The number on the right hand side of the inequality \eqref{eq:D>max cdc(H)} can be characterized in terms of the lengths 
of some special atoms in $\mathcal{B}(G)$.

\begin{definition} \label{def:geodesic atom}
An element $S\in \mathcal{B}(G)$ 
is called a \emph{directed geodesic atom} if 
there exists a $g\in \supp(S)$ such that 
$\length_{\supp(S\bdot g^{[-1]})}(g^{-1})=|S|-1$. 

Write 
$\mathcal{GA}(G)$ for the set of directed geodesic atoms in $\mathcal{B}(G)$. 
\end{definition}

\begin{remark} The terminology ``directed geodesic atom'' is motivated by the following: 
\begin{itemize}
\item[(i)]  
$S\in \mathcal{B}(G)$ is a directed geodesic atom in the sense of Definition~\ref{def:geodesic atom} if 
and only if $S$ can be written as 
$b_1\bdot \dots\bdot b_d\bdot b_{d+1}$, 
where $1_G,b_1,b_1b_2,\dots,b_1\cdots  b_d$ 
is a geodesic path from $1_G$ to $b_1\cdots  b_d$ in 
the directed Cayley graph $\cay(\langle b_1,\dots,b_d\rangle, \supp(b_1\bdot \dots \bdot b_d))$, and $b_{d+1}^{-1}=b_1\cdots b_d$. 
\item[(ii)] By Lemma~\ref{lemma:geodesic atoms} a directed geodesic atom in $\mathcal{B}(G)$ is an atom in $\mathcal{B}(G)$, 
so 
\begin{equation}\label{eq:GA subset A}
\mathcal{GA}(G)\subseteq \mathcal{A}(G).\end{equation}
\end{itemize}
\end{remark} 

 \begin{example} \label{example:non-geodesic atom} 
 Keeping the notation of  Example~\ref{example:d>d*}, set 
 $g_8:=-(2g_1+2g_2+2g_3+2g_4+2g_5+g_6+g_7)$.  
 The sequence $g_1^{[2]}\bdot g_2^{[2]}\bdot g_3^{[2]}\bdot g_4^{[2]}\bdot g_5^{[2]}\bdot g_6 \bdot g_7\bdot g_8$ is an atom in $\mathcal{B}(C_3\oplus C_3\oplus C_3\oplus C_6)$, 
 but our considerations in Section~\ref{sec:abelian} show that it is not a directed geodesic atom.  
 \end{example} 

In analogy with the large Davenport constant, we define the following quantities: 

\begin{definition}\label{def:GD(G)}
\begin{itemize} 
\item[(i)] The \emph{geodesic large Davenport constant of} $G$ is 
\[\GD{G}:=\max\{|S|\mid S\in \mathcal{GA}(G)\}.\]
\item[(ii)] Set 
\[\ddiamstar{G}:=\max\{|S|\colon S\in \mathcal{GA}(G),\ \langle \supp(S)\rangle=G\}-1.\]
\end{itemize}
    \end{definition} 

As an immediate consequence of 
\eqref{eq:ddim-length} and Definition~\ref{def:geodesic atom} we 
get the following: 

\begin{lemma}\label{lemma:cayley diam from geodesic atoms}
We have 
\begin{itemize}
\item[(i)] $\ddiam{G}\ge \ddiamstar{G}$.
\item[(ii)] $\max\{\ddiam{H}\mid H\le G\}=\GD{G}-1$. 
\end{itemize}
\end{lemma}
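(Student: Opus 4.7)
The plan is to establish both claims by unpacking the definitions and translating between directed geodesic atoms in $\mathcal{B}(G)$ and geodesic paths from $1_G$ in Cayley graphs of subgroups of $G$. The key repeated observation I will use is: if $S\in\mathcal{GA}(G)$ and $g\in\supp(S)$ is a witness, meaning $\length_{\supp(S\bdot g^{[-1]})}(g^{-1})=|S|-1$, then setting $B:=\supp(S\bdot g^{[-1]})$ one has $g^{-1}\in\langle B\rangle$, and consequently $\langle B\rangle=\langle\supp(S)\rangle$. Thus every directed geodesic atom automatically produces a subgroup $H:=\langle B\rangle$, generated by $B$, in which the Cayley distance from $1_H$ to $g^{-1}$ is exactly $|S|-1$.

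With this in hand, (i) becomes almost immediate. I will take $S\in\mathcal{GA}(G)$ with $\langle\supp(S)\rangle=G$ and $|S|=\ddiamstar{G}+1$, along with a witness $g$. The observation above gives $\langle B\rangle=G$, so $B$ generates $G$, and \eqref{eq:diam(cay(G,B))} yields
\[\ddiam{G}\ \geq\ \diam(\cay(G,B))\ \geq\ \length_B(g^{-1})\ =\ |S|-1\ =\ \ddiamstar{G}.\]
The same reasoning, applied to a length-$\GD{G}$ directed geodesic atom $S$ but without imposing $\langle\supp(S)\rangle=G$, handles the $\geq$ direction of (ii): with $H:=\langle B\rangle$ one obtains $\ddiam{H}\geq\length_B(g^{-1})=\GD{G}-1$.

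For the $\leq$ direction of (ii) I will run the construction in reverse. Fix $H\leq G$ and a generating set $B$ of $H$ realizing $\ddiam{H}=:d$, pick $h\in H$ with $\length_B(h)=d$, write $h=b_1\cdots b_d$ with $b_i\in B$, and set $b_{d+1}:=h^{-1}$. The factorization gives $\length_{\supp(b_1\bdot\cdots\bdot b_d)}(h)\leq d$, while the reverse inequality follows from $\supp(b_1\bdot\cdots\bdot b_d)\subseteq B$ together with $\length_B(h)=d$; so Lemma~\ref{lemma:geodesic atoms} applies and yields an atom $S:=b_1\bdot\cdots\bdot b_d\bdot b_{d+1}\in\mathcal{A}(G)$, which with the witness $g:=b_{d+1}$ is visibly a directed geodesic atom. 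Hence $\ddiam{H}+1=|S|\leq\GD{G}$. I do not anticipate any serious obstacle here: the whole argument is a translation between Definition~\ref{def:geodesic atom} and equations \eqref{eq:diam(cay(G,B))}--\eqref{eq:ddim-length}, and the only point needing momentary care is checking that the ambient subgroup $\langle B\rangle$ really contains $g$, which is forced by the very word expressing $g^{-1}$ that realizes $\length_B(g^{-1})=|S|-1$.
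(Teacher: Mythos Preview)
Your proposal is correct and is precisely the unpacking of what the paper leaves implicit: the paper gives no explicit proof, stating only that the lemma is ``an immediate consequence of \eqref{eq:ddim-length} and Definition~\ref{def:geodesic atom}''. Your argument spells out exactly that translation in both directions, including the one point worth noting---that for a witness $g$ the subgroup $\langle\supp(S\bdot g^{[-1]})\rangle$ already contains $g$ and hence equals $\langle\supp(S)\rangle$---so there is nothing to add.
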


\begin{corollary} \label{cor:diamCay<GD(G)} 
For any finite group $G$ we have  
\[\ddiamstar{G}\le \ddiam{G}\le \GD{G}-1\le \D{G}-1.\] 
\end{corollary}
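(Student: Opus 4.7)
The plan is to assemble this chain of inequalities directly from the preceding lemma, remark, and definitions, with no new content needed beyond bookkeeping. I would split the statement into its three component inequalities and verify each in turn.

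First, the leftmost inequality $\ddiamstar{G}\le \ddiam{G}$ is exactly the content of Lemma~\ref{lemma:cayley diam from geodesic atoms}(i), so nothing further is required there.

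For the middle inequality $\ddiam{G}\le \GD{G}-1$, I would invoke Lemma~\ref{lemma:cayley diam from geodesic atoms}(ii), which states that $\max\{\ddiam{H}\mid H\le G\}=\GD{G}-1$. Since $G$ itself is among the subgroups $H\le G$, the directed Cayley diameter $\ddiam{G}$ is dominated by this maximum, giving $\ddiam{G}\le \GD{G}-1$.

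For the rightmost inequality $\GD{G}-1\le \D{G}-1$, equivalently $\GD{G}\le \D{G}$, I would use the containment $\mathcal{GA}(G)\subseteq \mathcal{A}(G)$ noted in \eqref{eq:GA subset A}. Taking the maximum of $|S|$ over the smaller set $\mathcal{GA}(G)$ yields at most the maximum of $|S|$ over $\mathcal{A}(G)$, and these maxima are by definition $\GD{G}$ and $\D{G}$ respectively. I expect no real obstacle here: the only subtlety to check is that $\mathcal{GA}(G)$ is nonempty so that the maximum is well defined, which is immediate because $1_G^{[1]}$ (or any short product-one sequence produced by Lemma~\ref{lemma:geodesic atoms} starting from the empty sequence) is a directed geodesic atom. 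Concatenating the three steps yields the displayed chain.
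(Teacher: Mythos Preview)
Your proposal is correct and matches the paper's own proof essentially line for line: the paper also derives the first two inequalities from Lemma~\ref{lemma:cayley diam from geodesic atoms} (parts (i) and (ii), the latter via $G\le G$) and the third from the containment \eqref{eq:GA subset A}. Your extra remark about nonemptiness of $\mathcal{GA}(G)$ is a harmless bookkeeping detail the paper leaves implicit.
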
 
\begin{proof} 
The first two inequalities follow from 
Lemma~\ref{lemma:cayley diam from geodesic atoms}, the third inequality follows from \eqref{eq:GA subset A}.   
\end{proof}

Lemma~\ref{lemma:cayley diam from geodesic atoms} (ii) implies that 
$\ddiam{G}$ and $\GD{G}$ are tightly 
linked for certain groups as follows: 

\begin{corollary} \label{prop:monotone cayley diameter} 
The following conditions are equivalent 
for a finite group $G$: 
\begin{itemize}
    \item[(i)] $\ddiam{G}\ge \ddiam{H}$ for all  
subgroups $H$ of $G$.
\item[(ii)] $\ddiam{G}=\GD{G}-1$. 
\end{itemize}
Moreover, if $\ddiamstar{G}=\GD{G}-1$, then (i) and (ii) hold. 
\end{corollary}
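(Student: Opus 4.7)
The plan is to derive the corollary directly from Lemma~\ref{lemma:cayley diam from geodesic atoms}(ii) and Corollary~\ref{cor:diamCay<GD(G)}. Setting $M:=\max\{\ddiam{H}\mid H\le G\}$, Lemma~\ref{lemma:cayley diam from geodesic atoms}(ii) identifies this quantity as $M=\GD{G}-1$. The crucial (trivial) observation I would record at the outset is that $G$ is itself a subgroup of $G$, so $\ddiam{G}$ is one of the numbers entering the maximum defining $M$; hence $\ddiam{G}\le M=\GD{G}-1$ holds unconditionally, which is also part of Corollary~\ref{cor:diamCay<GD(G)}.

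For the implication (i) $\Rightarrow$ (ii), I would observe that (i) says exactly that $\ddiam{G}$ is an upper bound for the set $\{\ddiam{H}\mid H\le G\}$, so $\ddiam{G}\ge M=\GD{G}-1$; combined with the reverse inequality above this gives $\ddiam{G}=\GD{G}-1$, which is (ii). Conversely, assuming (ii), for every subgroup $H\le G$ we have $\ddiam{H}\le M=\GD{G}-1=\ddiam{G}$ by the definition of $M$, which is (i). For the closing ``moreover'' clause, if $\ddiamstar{G}=\GD{G}-1$ then the chain $\ddiamstar{G}\le \ddiam{G}\le \GD{G}-1$ of Corollary~\ref{cor:diamCay<GD(G)} collapses, forcing $\ddiam{G}=\GD{G}-1$, which is (ii) and therefore also (i).

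There is no substantive obstacle here: the corollary is essentially a bookkeeping consequence of Lemma~\ref{lemma:cayley diam from geodesic atoms}(ii). The only step worth being explicit about is the trivial fact that $\ddiam{G}$ itself appears in the maximum defining $M$, which is what makes both directions of the equivalence work and also makes the ``moreover'' clause immediate.
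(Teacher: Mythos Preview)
Your proposal is correct and matches the paper's approach: the paper does not give an explicit proof of this corollary at all, but introduces it with the sentence ``Lemma~\ref{lemma:cayley diam from geodesic atoms} (ii) implies that $\ddiam{G}$ and $\GD{G}$ are tightly linked for certain groups as follows,'' indicating exactly the derivation you carry out. Your argument is the natural unpacking of that remark, together with the inequality chain of Corollary~\ref{cor:diamCay<GD(G)} for the ``moreover'' clause.
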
 
 
\begin{remark}\label{remark:not monotone}
The equivalent conditions (i) and (ii) of 
Corollary~\ref{prop:monotone cayley diameter} 
hold for all finite abelian groups $G$ by 
Theorem~\ref{thm:diam=d*}. 
However, they do not hold for all non-abelian finite groups. The smallest examples are the groups with 
GAP identifiers 
\texttt{[16,6], [16,8], [16,9]} in the table in Section~\ref{sec:calculations}. 
\end{remark}

Generating systems of $G$ can also be read off from $\mathcal{GA}(G)$ 
(and hence from $\mathcal{A}(G)$): 

\begin{proposition} \label{prop:generators} 
A subset $B$ generates a finite group $G$ if and only if for all $g\in G$ there exists an 
$S\in \mathcal{GA}(G)$ with $g\in \mathrm{supp}(S)$ and 
$\mathrm{supp}(S\bdot g^{[-1]})\subseteq B$. 
\end{proposition}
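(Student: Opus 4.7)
The proposition is an equivalence between the generating property of $B$ and the existence of certain directed geodesic atoms witnessing each $g\in G$. I would prove both directions by a direct unpacking of the definitions, building the required atom explicitly on one side and extracting a factorisation from the other.

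For the forward direction, suppose $B$ generates $G$ and fix $g\in G$. Setting $d:=\length_B(g^{-1})$ and choosing $b_1,\dots,b_d\in B$ with $b_1\cdots b_d=g^{-1}$, I would take
\[S:=b_1\bdot\cdots\bdot b_d\bdot g.\]
Then $S\in\mathcal{B}(G)$ since $b_1\cdots b_d\cdot g=1_G$, we have $g\in\supp(S)$ by construction, and $\supp(S\bdot g^{[-1]})\subseteq\{b_1,\dots,b_d\}\subseteq B$. It remains to verify that $S$ is a directed geodesic atom in the sense of Definition~\ref{def:geodesic atom}, with $g$ itself as the distinguished element. The factorisation $g^{-1}=b_1\cdots b_d$ shows $\length_{\supp(S\bdot g^{[-1]})}(g^{-1})\le d$, while the reverse inequality follows from the minimality of $d$ together with $\supp(S\bdot g^{[-1]})\subseteq B$, since enlarging the generating set can only shorten the minimal length. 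Thus $\length_{\supp(S\bdot g^{[-1]})}(g^{-1})=d=|S|-1$ as required.

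For the reverse direction, pick any $g\in G$ and let $S_g\in\mathcal{GA}(G)$ be a geodesic atom provided by the hypothesis, so that $g\in\supp(S_g)$ and $T:=S_g\bdot g^{[-1]}$ satisfies $\supp(T)\subseteq B$. Because $S_g$ is product-one, there is an ordering of its terms whose product equals $1_G$; after a cyclic rotation placing the distinguished copy of $g$ at the end, one obtains an equation $c_1\cdots c_m\cdot g=1_G$ with $c_1,\dots,c_m\in\supp(T)\subseteq B$. Hence $g^{-1}=c_1\cdots c_m\in\langle B\rangle$, so $g\in\langle B\rangle$. As $g$ was arbitrary, $B$ generates $G$.

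The whole proof is essentially a translation between the shortest-factorisation language of $\length_B(\cdot)$ and the geodesic-atom language of $\mathcal{GA}(G)$, so I do not expect any serious obstacle. The only points worth a brief sanity check are the degenerate case $g=1_G$ in the forward direction, where $d=0$ forces $S$ to be the length-one sequence $1_G$, which satisfies the definition with the empty-product conventions, and the observation that when $g\in B$ the support $\supp(S\bdot g^{[-1]})$ may still contain $g$, which is harmless precisely because $g\in B$.
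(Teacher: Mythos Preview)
Your proof is correct and follows essentially the same approach as the paper's own proof: in the forward direction you construct $S=b_1\bdot\cdots\bdot b_d\bdot g$ from a minimal-length expression $g^{-1}=b_1\cdots b_d$, and in the reverse direction you extract a factorisation of $g^{-1}$ over $B$ from the product-one sequence $S_g$. Your version is in fact slightly more careful than the paper's, since you spell out why $\length_{\supp(S\bdot g^{[-1]})}(g^{-1})=d$ (via the monotonicity of $\length$ under enlarging the generating set) and you explicitly handle the degenerate case $g=1_G$ and the possibility $g\in B$.
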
 

\begin{proof}
Suppose that $B$ generates $G$. Then for each $g\in G$ there exist 
$b_1,\dots,b_n\in B$ with $g^{-1}=b_1\cdots b_n$, and $n=\length_B(g^{-1})$. 
Now $S:=b_1\bdot\dots\bdot b_n\bdot g$ belongs to $\mathcal{GA}(G)$ and $S$ satisfies 
$g\in \mathrm{supp}(S)$ and 
$\mathrm{supp}(S\bdot g^{[-1]})\subseteq B$. 

Conversely, let $B$ be a subset of $G$ such that for all $g\in G$ there exists an 
$S\in \mathcal{GA}(G)$ with $g\in \mathrm{supp}(S)$ and 
$\mathrm{supp}(S\bdot g^{[-1]})\subseteq B$. Then in particular, for all $g\in G$ there exist elements $b_1,\dots,b_n\in B$ with $b_1\cdots b_ng=1_G$, i.e. 
$g^{-1}=b_1\cdots b_n$. That is, $B$ generates $G$. 
\end{proof} 

Given the set $\mathcal{GA}(G)$ and a fixed generating system $B$ of $G$, 
it is in principle an easy matter to determine $\diam(\cay(G,B))$. 
Select the following subsets of $\mathcal{GA}(G)$: 
\[\mathcal{GA}(G,B,0):=\{S\in \mathcal{GA}(G)\mid \mathrm{supp}(S)\subseteq B\}.\]  
For $g\in G\setminus B$ set 
\[\mathcal{GA}(G,B,g,1):=
\{S\in \mathcal{GA}(G)\mid 
\mathsf{v}_g(S)=1,
\ \mathrm{supp}(S\bdot g^{[-1]})\subseteq B\},\]
and set 
\[\mathcal{GA}(G,B,g):=
\{S\in   \mathcal{GA}(G,B,g,1)\mid \forall T\in \mathcal{GA}(G,B,g,1): |T|\ge |S|\}.\] 
Finally, 
\[\mathcal{GA}(G,B):=\mathcal{GA}(G,B,0)\cup \bigcup_{g\in G\setminus B}\mathcal{GA}(G,B,g).\] 

\begin{proposition}\label{prop:diamCay(G,B)}
We have the equality 
\begin{equation}\label{eq:max Slg} 
\diam(\cay(G,B))=\max\{|S| \mid S\in \mathcal{GA}(G,B)\}-1.
\end{equation}
\end{proposition}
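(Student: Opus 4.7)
The plan is to prove both inequalities separately, using Lemma~\ref{lemma:geodesic atoms} for the lower bound and the product-one property of atoms for the upper bound.

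First I would establish the inequality $\geq$. Pick $g\in G$ with $\length_B(g)=\diam(\cay(G,B))=:d$ and fix a $B$-geodesic factorization $g=b_1\cdots b_d$ with $b_i\in B$. Set $S:=b_1\bdot\cdots\bdot b_d\bdot g^{-1}$. A short verification gives $\length_{\supp(b_1\bdot\cdots\bdot b_d)}(g)=d$: the upper bound is the given factorization, and a strictly shorter factorization in the subset $\{b_1,\dots,b_d\}\subseteq B$ would force $\length_B(g)<d$, a contradiction. Lemma~\ref{lemma:geodesic atoms} then yields that $S$ is a directed geodesic atom of length $d+1$. To place $S$ inside $\mathcal{GA}(G,B)$, I would split into two cases: if $g^{-1}\in B$, then $\supp(S)\subseteq B$ and $S\in\mathcal{GA}(G,B,0)$; if $g^{-1}\notin B$, then $\mathsf{v}_{g^{-1}}(S)=1$ (since no $b_i\in B$ equals $g^{-1}$) and $S\in\mathcal{GA}(G,B,g^{-1},1)$. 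The minimality required to conclude $S\in\mathcal{GA}(G,B,g^{-1})$ in the latter case will come from the analysis below.

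Next, for the inequality $\leq$ on atoms in $\mathcal{GA}(G,B,g)$ with $g\in G\setminus B$, I would take any $T\in\mathcal{GA}(G,B,g,1)$, write $T=T'\bdot g$ with $\supp(T')\subseteq B$, and exploit that $T$ is product-one to exhibit an ordering of $T'$ whose product in $G$ equals $g^{-1}$. This yields $\length_B(g^{-1})\leq|T|-1$. Combined with the explicit atom of length $\length_B(g^{-1})+1$ constructed in the previous paragraph, the minimum length in $\mathcal{GA}(G,B,g,1)$ is exactly $\length_B(g^{-1})+1$. Hence every $S\in\mathcal{GA}(G,B,g)$ satisfies $|S|-1=\length_B(g^{-1})\leq\diam(\cay(G,B))$, and this simultaneously closes the minimality gap from the first paragraph.

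Finally, for atoms $S\in\mathcal{GA}(G,B,0)$ I would invoke the witness $h\in\supp(S)$ with $\length_{\supp(S\bdot h^{[-1]})}(h^{-1})=|S|-1$ from the directed-geodesic-atom condition, and apply the same kind of product-one argument to relate $|S|$ to $\length_B(h^{-1})$. This last case is the main obstacle: since the witness only provides a geodesic inside the possibly smaller sub-Cayley graph on $\supp(S\bdot h^{[-1]})$, establishing $|S|-1\leq\diam(\cay(G,B))$ requires exploiting the atom structure together with the inclusion $\supp(S)\subseteq B$ to conclude that the sub-Cayley geodesic realising the witness cannot be strictly longer than a $\cay(G,B)$-geodesic to $h^{-1}$. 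Putting the three cases together yields the stated equality.
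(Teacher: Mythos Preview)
Your instinct that the $\mathcal{GA}(G,B,0)$ case is the real obstacle is correct --- and in fact the obstacle is fatal: the stated equality fails in general, so the step you flag cannot be completed by any argument. For every $b\in B$ the sequence $b^{[\mathrm{ord}(b)]}$ is a directed geodesic atom with support $\{b\}\subseteq B$, hence lies in $\mathcal{GA}(G,B,0)$ and forces the right-hand side of \eqref{eq:max Slg} to be at least $\mathrm{ord}(b)-1$; nothing prevents this from exceeding $\diam(\cay(G,B))$. A small abelian instance: $G=C_6=\langle a\rangle$, $B=\{a,a^{2}\}$, where $\diam(\cay(G,B))=3$ but $a^{[6]}\in\mathcal{GA}(G,B,0)$ pushes the right-hand side to at least $5$. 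For an example with \emph{irredundant} $B$, take $G=C_8\rtimes C_2$ (GAP identifier \texttt{[16,6]}), which by the table in Section~\ref{sec:calculations} has $\ddiam{G}=6$; for $a$ of order $8$ and any $b\notin\langle a\rangle$, the set $B=\{a,b\}$ is an irredundant generating system with $a^{[8]}\in\mathcal{GA}(G,B,0)$, so the right-hand side is at least $7>6\ge\diam(\cay(G,B))$.

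The paper's own proof glosses over exactly this point: in arguing $d\ge m$ it asserts that a maximum-length $S\in\mathcal{GA}(G,B)$ can be written as $b_1\bdot\dots\bdot b_m\bdot g$ with $b_i\in B$ and $\length_B(g^{-1})=m$, but when $S\in\mathcal{GA}(G,B,0)$ the directed-geodesic-atom witness only guarantees $\length_{\supp(S\bdot g^{[-1]})}(g^{-1})=m$, which is precisely the weaker statement you isolated. Your handling of the sets $\mathcal{GA}(G,B,g)$ for $g\notin B$ and of the inequality $m\ge d$ is correct and matches the paper. The defect is in the proposition itself; a natural repair would replace $\mathcal{GA}(G,B,0)$ by sets carrying a minimality condition analogous to that defining $\mathcal{GA}(G,B,g)$.
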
 

\begin{proof} 
Set $d:=\diam(\cay(G,B))$ and write $m$ for the number on the right hand side of \eqref{eq:max Slg}. First we show that $m\ge d$. By definition of $d$ (see \eqref{eq:diam(cay(G,B))}) there exists 
an element $h:=b_1\cdots b_d$, where 
$b_1,\dots,b_d\in B$ and $\length_B(h)=d$. 
Then for $g:=h^{-1}$ we have that 
$b_1\bdot \dots\bdot b_d\bdot g$ is a directed geodesic atom, and it belongs to $\mathcal{GA}(G,B,0)$ if $g\in B$, and it belongs to $\mathcal{GA}(G,B,g)$ if $g\in G\setminus B$. This shows that $m\ge d$. 

By definition of $m$ there exists a directed geodesic atom 
$S=b_1\bdot \dots \bdot b_m\bdot g\in \mathcal{GA}(G,B)$, where $b_1,\dots,b_m\in B$ and $\length_B(g^{-1})=m$, showing that the reverse inequality $d\ge m$ holds.   
\end{proof} 

\section{Characterization of directed geodesic atoms}\label{sec:characterization}

The results of Section~\ref{sec:geodesic atoms} show that if we are given the set 
$\mathcal{GA}(G)$ of directed geodesic atoms for $G$, then we can read off generating systems of $G$, diameters of Cayley digraphs of $G$, and we can give upper and lower bounds for the directed Cayley 
diameter of $G$. This motivates our interest in the 
following characterization of the directed geodesic atoms in the set of all atoms $\mathcal{A}(G)$:  

\begin{theorem} \label{thm:equivalent}
The following conditions are equivalent for an element $S\in \mathcal{B}(G)$: 
\begin{itemize} 
\item[(i)] $S$ is not a directed geodesic atom. 
\item[(ii)] For all $g\in \mathrm{supp}(S)$ 
there exists a $T\in \mathcal{GA}(G)$ with $|T|<|S|$,  
such that $g\in \mathrm{supp}(T)$ and 
$\mathrm{supp}(T\bdot g^{[-1]})\subseteq \mathrm{supp}(S\bdot g^{[-1]})$. 
\item[(iii)] For all $g\in \mathrm{supp}(S)$ 
there exists a $T\in \mathcal{A}(G)$ with $|T|<|S|$,  
such that $g\in \mathrm{supp}(T)$ and 
$\mathrm{supp}(T\bdot g^{[-1]})\subseteq \mathrm{supp}(S\bdot g^{[-1]})$. 
\end{itemize} 
\end{theorem}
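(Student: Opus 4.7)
The plan is to prove the cycle of implications $(i) \Rightarrow (ii) \Rightarrow (iii) \Rightarrow (i)$. The implication $(ii) \Rightarrow (iii)$ is immediate from the inclusion $\mathcal{GA}(G) \subseteq \mathcal{A}(G)$ recorded in \eqref{eq:GA subset A}.

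For $(iii) \Rightarrow (i)$ I would argue by contraposition. Assume $S$ is a directed geodesic atom, witnessed by some specific $g \in \supp(S)$ with $\length_{\supp(S\bdot g^{[-1]})}(g^{-1}) = |S|-1$, and apply $(iii)$ to this same $g$ to obtain $T \in \mathcal{A}(G)$ with $|T|<|S|$, $g\in\supp(T)$, and $\supp(T\bdot g^{[-1]})\subseteq \supp(S\bdot g^{[-1]})$. Since $T$ is product-one, some ordering of its terms has product $1_G$; by cyclic rotation (which preserves the product-one property via conjugation) I may place the distinguished occurrence of $g$ at the last position, yielding $h_1\cdots h_{|T|-1} g = 1_G$ with each $h_i \in \supp(T\bdot g^{[-1]}) \subseteq \supp(S\bdot g^{[-1]})$. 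Then $g^{-1}=h_1\cdots h_{|T|-1}$ exhibits $\length_{\supp(S\bdot g^{[-1]})}(g^{-1}) \le |T|-1 < |S|-1$, contradicting the geodesic witness.

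The substantive direction is $(i) \Rightarrow (ii)$. Since $S$ itself is a product-one sequence, for each $g \in \supp(S)$ one always has $\length_{\supp(S\bdot g^{[-1]})}(g^{-1}) \le |S|-1$, and the failure of (i) sharpens this to $m := \length_{\supp(S\bdot g^{[-1]})}(g^{-1}) < |S|-1$. Choose $c_1,\dots,c_m \in \supp(S\bdot g^{[-1]})$ realizing this minimum, so $c_1\cdots c_m = g^{-1}$, and set $T := c_1 \bdot \dots \bdot c_m \bdot g$; then $T \in \mathcal{B}(G)$ satisfies $|T|=m+1<|S|$, $g\in\supp(T)$, and $\supp(T\bdot g^{[-1]})=\{c_1,\dots,c_m\}\subseteq\supp(S\bdot g^{[-1]})$. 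To place $T$ in $\mathcal{GA}(G)$, I verify $\length_{\supp(T\bdot g^{[-1]})}(g^{-1}) = m = |T|-1$: the bound $\le m$ is immediate from $c_1\cdots c_m = g^{-1}$, while the reverse follows from monotonicity of $\length$ under shrinking its subscript together with the minimality of $m$. Lemma~\ref{lemma:geodesic atoms} applied to the sequence $c_1\bdot\dots\bdot c_m$ then confirms that $T$ is an atom, so $T \in \mathcal{GA}(G)$. The step requiring most care is precisely this interplay between the monotonicity of $\length$ under subset shrinkage and the minimality defining $m$; the remainder is a direct unwinding of the definitions, with the cyclic-rotation trick for product-one sequences as the one elementary external ingredient.
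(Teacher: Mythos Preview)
Your proof is correct and follows essentially the same route as the paper's: the same cycle $(i)\Rightarrow(ii)\Rightarrow(iii)\Rightarrow(i)$, the same construction $T=c_1\bdot\dots\bdot c_m\bdot g$ from a shortest expression for $g^{-1}$, and the same contradiction via a cyclic reordering of $T$ in the last step. Two small remarks: the phrase ``the failure of (i)'' should read ``assumption (i)'' (since (i) itself is the negative statement), and your explicit monotonicity check that $\length_{\{c_1,\dots,c_m\}}(g^{-1})=m$ is a welcome clarification of a step the paper leaves to ``by Definition~\ref{def:geodesic atom}.''
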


\begin{proof} $(i)\Longrightarrow (ii)$: 
Assume $S\in \mathcal{B}(G)\setminus \mathcal{GA}(G)$. 
Pick an arbitrary $g$ from $\mathrm{supp}(S)$. Then we can write 
$S=b_1\bdot \dots \bdot b_d\bdot g$, with  $g^{-1}=b_1\cdots b_d$.  
Since $S\notin \mathcal{GA}(G)$, necessarily we have $\length_B(g^{-1})<d$, where 
$B=\supp(b_1\bdot \dots \bdot b_d)$. 
It follows that setting $k:=\length_B(g^{-1})$, there exist elements 
$c_1,\dots,c_k\in B$ such that $g^{-1}=c_1\cdots c_k$. 
Set $T:=c_1\bdot \dots \bdot c_k\bdot g$. 
Then $T\in \mathcal{GA}(G)$ by  
Definition~\ref{def:geodesic atom}.
Moreover, $|T|=k+1<d+1=|S|$, $g\in \mathrm{supp}(T)$, and $\mathrm{supp}(T\bdot g^{[-1]})\subseteq 
B=\mathrm{supp}(S\bdot g^{[-1]})$.  

$(ii)\Longrightarrow (iii)$: This is 
trivial, since $\mathcal{GA}(G)\subseteq \mathcal{A}(G)$ by \eqref{eq:GA subset A}. 
 
$(iii)\Longrightarrow (i)$:  
Suppose for contradiction that (iii) holds for $S$ and $S\in \mathcal{GA}(G)$. 
Then we can write $S=b_1\bdot \dots \bdot b_d\bdot b_{d+1}$, where $b_1\cdots b_d b_{d+1}=1_G$, and setting 
$B:=\supp(b_1\bdot \dots \bdot b_d)$, we have 
$\length_B(b_{d+1}^{-1})=d$. 
Now for $g:=b_{d+1}$ we have $\mathrm{supp}(S\bdot g^{[-1]})=B$.  
Take any $T\in \mathcal{A}(G)$ with $g\in \mathrm{supp}(T)$, $\mathrm{supp}(T\bdot g^{[-1]})\subseteq B$ and $|T|<|S|$. 
We have $T=c_1\bdot \dots \bdot c_k\bdot g$ with $c_i\in B$. With an appropriate numbering of the $c_i$, we have $c_1\cdots c_kg=1_G$, so $c_1\cdots c_k=g^{-1}=b_{d+1}^{-1}$. It follows that $k\ge \length_B(b_{d+1}^{-1})=d$, implying in turn that $|T|=k+1\ge d+1=|S|$. This contradiction proves our claim. 
\end{proof}

\begin{remark} 
An algorithm to compute the set $\mathcal{A}(G)$ of atoms in $\mathcal{B}(G)$ is given in \cite[Section 6]{cz-d-sz}. 
Now Theorem~\ref{thm:equivalent} furnishes a simple algorithm to 
build up the set $\mathcal{GA}(G)$ from 
$\mathcal{A}(G)$. Namely, suppose we found all elements in 
$\mathcal{GA}(G)$ with length at most $t$. Then we have to go through all the atoms in $\mathcal{A}(G)$ of length $t+1$, and by 
Theorem~\ref{thm:equivalent} it is easy 
to select those which belong to $\mathcal{GA}(G)$.  
In Section~\ref{sec:algorithm} we 
shall present an algorithm which computes 
directly $\mathcal{GA}(G)$, without a prior computation of $\mathcal{A}(G)$. 
\end{remark} 


\section{Computing \texorpdfstring{$\GD{G}$}{GD(G)} and 
\texorpdfstring{$\ddiamstar{G}$}{dcd*(G)}}\label{sec:algorithm}

Recall the following definition from \cite{cz-d-sz}:
\begin{definition}
    Let $S, T\in\mathcal{F}(G)$.
    The sequence $T$ is called \emph{a splitting} of the sequence $S$  if  $T = S \bdot g^{[-1]} \bdot x \bdot y$ for some $g\in \supp(S)$ and $x,y \in G$ such that $xy=g$. We denote by $S \prec T$ the fact that $T$ is a splitting of $S$. For any subset $\mathcal S \subset \mathcal F(G)$ we set $\gamma(\mathcal S) := \{ T \in \mathcal F(G) \mid S\prec T \text{ for some } S \in \mathcal S  \}$.
\end{definition}

Let us denote simply by $\mathcal{GA}_k$ the set of all directed geodesic atoms of length $k\ge 0$ in $\mathcal{GA}(G)$ and by $\mathcal{GR}_k$ a set of 
$\Aut(G)$-orbit representatives of $\mathcal{GA}_k$ 
(here $\Aut(G)$ stands for the automorphism group of $G$).  
The possibility to build up $\mathcal{GA}_k$ recursively is based on the following lemma: 

\begin{lemma}\label{splitting}
For any $k \ge 1$ we have $\mathcal{GA}_{k+1} \subseteq \gamma(\mathcal{GA}_k)$. 
\end{lemma}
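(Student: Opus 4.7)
The plan is, for each $T \in \mathcal{GA}_{k+1}$, to construct an $S \in \mathcal{GA}_k$ with $S \prec T$ by \emph{contracting the final step of the geodesic path that $T$ encodes}. By the equivalent characterisation recorded in part (i) of the Remark following Definition~\ref{def:geodesic atom}, one can write $T = b_1 \bdot \dots \bdot b_k \bdot g$, where $b_1, \ldots, b_k$ is an ordering of the content of $T \bdot g^{[-1]}$ chosen so that $g^{-1} = b_1 \cdots b_k$ is a geodesic expression in $B := \supp(T \bdot g^{[-1]})$. Setting $c := b_k g$, my candidate is
\[ S := b_1 \bdot \dots \bdot b_{k-1} \bdot c. \]

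The splitting relation $S \prec T$ is immediate, since $T = S \bdot c^{[-1]} \bdot b_k \bdot g$, with $c \in \supp(S)$ and $b_k g = c$. That $S \in \mathcal{B}(G)$ is likewise immediate, as $b_1 \cdots b_{k-1} \cdot (b_k g) = g^{-1} g = 1_G$. The crux is to verify $S \in \mathcal{GA}_k$, which is where I expect the only real work. I would take $c$ itself as the distinguished element in Definition~\ref{def:geodesic atom} and use the identity
\[ c^{-1} = g^{-1} b_k^{-1} = b_1 \cdots b_{k-1}, \]
which already witnesses $\length_{\{b_1, \ldots, b_{k-1}\}}(c^{-1}) \le k - 1$. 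For the matching lower bound I would argue by contradiction: any strictly shorter expression $c^{-1} = w_1 \cdots w_m$ with $m < k-1$ and $w_i \in \{b_1, \ldots, b_{k-1}\} \subseteq B$ would, after appending $b_k$, yield an expression of $g^{-1}$ of length $m + 1 < k$ in $B$, contradicting the geodesicity of $b_1 \cdots b_k$.

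The main (mild) obstacle is book-keeping: the support identity $\supp(S \bdot c^{[-1]}) = \{b_1, \ldots, b_{k-1}\}$ (as a set) must be justified in each of the possible cases — when $c \notin \{b_1, \ldots, b_{k-1}\} \cup \{1_G\}$; when $c$ coincides with some $b_i$, $i < k$ (so that $c$ has multiplicity $\ge 2$ in $S$); and when $c = 1_G$ (which can occur only in the base case $k = 1$, where $S = 1_G \in \mathcal{GA}_1$). In each case a direct inspection of multiplicities confirms the identity, so the length computation above applies uniformly and yields $S \in \mathcal{GA}_k$, completing the inclusion $\mathcal{GA}_{k+1} \subseteq \gamma(\mathcal{GA}_k)$.
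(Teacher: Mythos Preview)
Your proof is correct and follows essentially the same strategy as the paper's: both contract the last generator $b_k$ with the distinguished element $g$ to form $c = b_k g$ (the paper's $h = g_k g_{k+1}$), and both verify geodesicity of the shorter sequence by the same contradiction argument --- appending $b_k$ to a putative shorter expression for $c^{-1}$ would shorten the original geodesic.

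One small remark: the case analysis in your final paragraph is unnecessary. Since $S$ is \emph{defined} as $b_1 \bdot \dots \bdot b_{k-1} \bdot c$ in the free abelian monoid $\mathcal{F}(G)$, the equality $S \bdot c^{[-1]} = b_1 \bdot \dots \bdot b_{k-1}$ holds automatically (it is the unique $U$ with $S = c \bdot U$), regardless of whether $c$ happens to coincide with some $b_i$. Hence $\supp(S \bdot c^{[-1]}) = \{b_1,\dots,b_{k-1}\}$ without any case distinction. The only genuinely special situation is $c = 1_G$, which as you correctly note forces $k = 1$ and gives $S = 1_G \in \mathcal{GA}_1$; the paper handles this by treating $k=1$ separately at the outset.
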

\begin{proof}
All product-one sequences of length one or two (except the sequence $1_G\bdot 1_G$) are directed geodesic atoms. Assume that $k\ge 2$, and take an arbitrary $S\in \mathcal{GA}_{k+1}$.  Then 
\[S= g_1\bdot\dots \bdot g_k\bdot g_{k+1},  
\mbox{ where }g_1\dots g_kg_{k+1}=1_G 
\mbox{ and } 
\length_{\supp(g_1 \bdot \dots \bdot g_k)}(g_{k+1}^{-1})=k.\] 
Set 
\[T:=g_1\bdot \dots\bdot g_{k-1}\bdot h,
\quad \mbox{where}\quad h:=g_kg_{k+1}.\]
 Then obviously we have $g_1\dots g_{k-1}h=1_G$. We claim that 
$T$ is a directed geodesic atom. 
Suppose for contradiction that 
\[h^{-1}=c_1\cdots c_m 
\mbox{ for some }m<k-1 \mbox{ and }c_1,\dots,c_m\in \{g_1,\dots,g_{k-1}\}.\] 
Taking into account $h^{-1}=g_{k+1}^{-1}g_k^{-1}$ we conclude 
\[g_{k+1}^{-1}=c_1\dots c_mg_k,
\quad \mbox{where}\quad c_1,\dots,c_m,g_k\in   
\{g_1,\dots,g_k\}.
\]
 Thus 
 \[\length_{\supp(g_1\bdot \dots \bdot g_k)}(g_{k+1}^{-1})\le m+1<k,\] 
 a contradiction. 
Therefore $T\in \mathcal{GA}_k$ and  
$T\prec S$, hence $S\in \gamma(\mathcal{GA}_k)$. 
Since $S$ above was an arbitrary element of $\mathcal{GA}_{k+1}$, the statement is proved. 
\end{proof}

\begin{remark}\label{happyend}
Observe that by this lemma if $\mathcal{GA}_k = \emptyset$ for some $k \ge 1$ then $\mathcal{GA}_l = \emptyset$  for every $l >k$. So the smallest $k$ such that $\mathcal{GA}_k = \emptyset$ equals  $ \GD{G}+1$.
\end{remark}

\begin{lemma}\label{reps}
For any $k \ge 1$  let $\mathcal{GR}_k$ be a set of $\Aut(G)$-orbit representatives in $\mathcal{GA}_k$. 
Then there is a system of $\Aut(G)$-orbit representatives $\mathcal{GR}_{k+1}$ in $\mathcal{GA}_{k+1}$ such that $\mathcal{GR}_{k+1} \subseteq \gamma(\mathcal{GR}_k)$. 
\end{lemma}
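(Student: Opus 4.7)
The plan is to exploit the natural termwise action of $\Aut(G)$ on $\mathcal{F}(G)$ and show that this action is compatible with the splitting relation $\prec$, so that each $\Aut(G)$-orbit in $\mathcal{GA}_{k+1}$ can be represented by an element which is a splitting of a representative in $\mathcal{GR}_k$.

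First I would verify the basic compatibility. Given $\sigma \in \Aut(G)$ and $T \prec S$ (so $S = T \bdot g^{[-1]} \bdot x \bdot y$ with $xy = g$), applying $\sigma$ termwise gives $\sigma(S) = \sigma(T) \bdot \sigma(g)^{[-1]} \bdot \sigma(x) \bdot \sigma(y)$; since $\sigma(x)\sigma(y) = \sigma(g)$, this is again a splitting, i.e.\ $\sigma(T) \prec \sigma(S)$. In particular $\sigma(\gamma(\mathcal{S})) = \gamma(\sigma(\mathcal{S}))$ for any $\mathcal{S} \subseteq \mathcal{F}(G)$. I would also note that $\Aut(G)$ preserves product-one sequences, atomicity, and the quantity $\length_{\supp(\cdot)}(\cdot)$, hence each $\mathcal{GA}_k$ is $\Aut(G)$-stable.

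Next comes the core argument. Let $\mathcal{O}$ be an $\Aut(G)$-orbit inside $\mathcal{GA}_{k+1}$ and pick any $S \in \mathcal{O}$. By Lemma~\ref{splitting}, there exists $T \in \mathcal{GA}_k$ with $T \prec S$. Since $\mathcal{GR}_k$ is a transversal for the $\Aut(G)$-action on $\mathcal{GA}_k$, we can find $\sigma \in \Aut(G)$ and $T' \in \mathcal{GR}_k$ with $\sigma(T) = T'$. By the compatibility above, $T' \prec \sigma(S)$, so $\sigma(S) \in \gamma(\mathcal{GR}_k)$, and $\sigma(S)$ lies in the same orbit $\mathcal{O}$. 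Choosing such a $\sigma(S)$ as the representative of $\mathcal{O}$ and doing this once for every orbit yields the desired transversal $\mathcal{GR}_{k+1} \subseteq \gamma(\mathcal{GR}_k)$.

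There is no real obstacle here: the lemma is essentially a bookkeeping statement, and the only point that requires any thought is the compatibility of $\Aut(G)$ with splittings, which is immediate from the definitions. The practical value of the lemma (which I would emphasise in a remark, not in the proof itself) is algorithmic: to enumerate $\mathcal{GR}_{k+1}$ one only has to scan splittings of sequences in the already-computed small set $\mathcal{GR}_k$ rather than of the full orbit $\mathcal{GA}_k$, which is what makes the bottom-up computation of $\mathcal{GA}(G)$ and hence of $\GD{G}$ and $\ddiamstar{G}$ feasible.
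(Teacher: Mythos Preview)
Your proof is correct and follows essentially the same approach as the paper: establish that the splitting relation $\prec$ is $\Aut(G)$-equivariant, then use Lemma~\ref{splitting} to pull each orbit in $\mathcal{GA}_{k+1}$ back to a splitting of some element of $\mathcal{GR}_k$. The only differences are cosmetic (you swap the roles of the letters $S$ and $T$ and add the explicit remark that $\mathcal{GA}_k$ is $\Aut(G)$-stable).
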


\begin{proof}
Observe that the relation $\prec$ is compatible with the $\Aut(G)$-action in the sense that $S \prec T$ holds if and only if $\alpha(S) \prec \alpha(T)$ for some $\alpha \in \Aut(G)$ if and only if $\alpha(S) \prec \alpha(T)$ for each $\alpha \in \Aut(G)$. 

Now we prove that for any  sequence $T \in \mathcal{GA}_{k+1}$ its $\Aut(G)$-orbit $\Orb(T)$ has a non-empty intersection with $\gamma(\mathcal{GR}_k)$. 
By Lemma~\ref{splitting} there exists a sequence $S\prec T$ so that $S \in \mathcal{GA}_k$. 
As $\mathcal{GR}_k\subseteq  \mathcal{GA}_k$ is a complete set of 
$\Aut(G)$-orbit representatives, there is some $R \in \mathcal{GR}_k$ such that $R =\alpha (S)$ for some $\alpha \in \Aut(G)$.
Hence by the compatibility of $\prec$ it follows that $ R = \alpha(S) \prec \alpha(T) $. As a result $\alpha(T) \in \Orb(T) \cap \gamma(\mathcal{GR}_k)$.
\end{proof}

Given an arbitrary finite group $G$, Algorithm~\ref{geodesiclargedavenportalgorithm} below computes the geodesic large Davenport constant $\GD{G}$. The set of all directed geodesic atoms of $G$ may be obtained as $\mathcal{GA}(G)=\bigcup_k \mathcal{GA}_k$ after the repeat-until loop has terminated (right after line \ref{Line19}).

The algorithm begins with initializing the sets $\mathcal{GA}_1$ and $\mathcal{GR}_1$ with the sequence consisting only of the identity element of the group. In line \ref{Line11} the variable $S'$ will take successively all the values from the set $\gamma(\mathcal{GR}_{k-1})$, therefore it is guaranteed by Lemma \ref{reps} that for $k>1$ all the possible candidate sequences will be tested for being directed geodesic atoms and the set $\mathcal{GA}_k$ is built correctly. In line \ref{Line10} the variable $x$ is prevented from taking the values $1_G$ or $g$, since this would bring in the identity element in the support of $S'$.

The algorithm first checks whether a candidate sequence $S'$ is among the already computed directed geodesic atoms of length $k$ via the condition $S' \notin \mathcal{GA}_k$ from line \ref{Line12}, ensuring that $S'$ is a new directed geodesic atom (in the sense that it is not similar to any of the already collected directed geodesic atoms of length $k$). Checking whether a candidate sequence $S'$ is a directed geodesic atom is performed by the second condition in line \ref{Line12}, based on Theorem~\ref{thm:equivalent} (ii). The predicate $\mathbf{NG}(S',\cup_{i=1}^{k-1}\mathcal{GA}_i, k)$ in line \ref{Line12} is true if and only if the product-one sequence $S'$ of length $k$ is not a directed geodesic atom and may be given in the following way:  
\begin{align*}
\mathbf{NG}(S',\cup_{i=1}^{k-1}\mathcal{GA}_i, k)\iff & \forall g\in \mathrm{supp}(S'), \exists 
T\in \cup_{i=1}^{k-1}\mathcal{GA}_i \text{ such that} \\  
& g\in \mathrm{supp}(T) \text{ and } \mathrm{supp}(T\bdot g^{[-1]})\subseteq \mathrm{supp}(S'\bdot g^{[-1]}).\end{align*}

In line \ref{Line13} the newly found directed geodesic atom $S'$ gets added to the set $\mathcal{GR}_k$ of representatives and the set $\mathcal{GA}_k$ of directed geodesic atoms is completed with the $\Aut(G)$ orbit of $S'$, denoted by $\Orb(S')$ (in line \ref{Line14}). In the last step, the value $k-1$ is returned as the geodesic large Davenport constant of $G$, as explained in Remark \ref{happyend}.

\medskip{}
\begin{algorithm}[H]

	\medskip{}
	
	\SetKw{LogicAnd}{and} \SetKw{LogicOr}{or} \SetKw{LogicNot}{not}
	\SetKwInOut{Input}{input} \SetKwInOut{Output}{output}
	
	\Input{$G$ }
	\Output{$\GD{G}$ }
	
	\BlankLine
	\BlankLine
	
	$\mathcal{GA}_{1}\gets \{1_G\}$\;
	$\mathcal{GR}_{1}\gets \{1_G\}$\;
	$k\gets 1$\;
	
	\Repeat{$\mathcal{GA}_k=\emptyset$}{
    	$k\gets k+1$\;
		$\mathcal{GA}_k\gets\emptyset$\;
		$\mathcal{GR}_k\gets \emptyset$\;
		\ForAll{$S\in\mathcal{GR}_{k-1}$}{
			\ForAll{$g\in \supp(S)$}{
				\ForAll{$x\in G\setminus \{1_G, g\}$\label{Line10}}{
                    $S'\gets S\bdot g^{[-1]} \bdot x \bdot (x^{-1}g)$\label{Line11}\;
					\If{$S' \notin \mathcal{GA}_k$ \LogicAnd \LogicNot $\mathbf{NG}(S',\cup_{i=1}^{k-1}\mathcal{GA}_i, k)$\label{Line12}}{
						$\mathcal{GR}_{k}\gets \mathcal{GR}_{k}\cup\{S'\}$\label{Line13}\;
						$\mathcal{GA}_{k}\gets \mathcal{GA}_{k}\cup \Orb(S')$\label{Line14}\;
					}
				}
			}
		}
	}\label{Line19}
	
	\Return $k-1$\label{Line20}\;
	
	\BlankLine
	
\caption{GeodesicLargeDavenport($G$)} \label{geodesiclargedavenportalgorithm}
\end{algorithm}
\medskip

One could perform a  slight modification of Algorithm~\ref{geodesiclargedavenportalgorithm} in Line \ref{Line20} to return also the sequence of sets $(\mathcal{GR}_i)_{1\leq i\leq \GD{G}}$. With this information, we can devise Algorithm~\ref{directedcayleydiameteralgorithm}, which, given an arbitrary finite group $G$,
computes $\ddiamstar{G}$, yielding a lower bound for the directed Cayley diameter of $G$ by Lemma \ref{lemma:cayley diam from geodesic atoms} (i):

\medskip{}
\begin{algorithm}[H]

	\medskip{}
	
	\SetKw{LogicAnd}{and} \SetKw{LogicOr}{or} \SetKw{LogicNot}{not}
	\SetKwInOut{Input}{input} \SetKwInOut{Output}{output}
 \SetKw{Break}{break}
	
	\Input{$G$ }
	\Output{$\ddiamstar{G}$ }
	
	\BlankLine
	\BlankLine
	
	$\GD{G},\ (\mathcal{GR}_i)_{1\leq i\leq \GD{G}}\gets \mathrm{GeodesicLargeDavenport}(G)$\label{line1}\;
	$k\gets \GD{G} + 1$\label{line2}\;
	$dcdstar\_found\gets\textsf{false}$\label{line3}\;
	\Repeat{$dcdstar\_found$}{
    $k\gets k-1$\;
		\ForAll{$S\in \mathcal{GR}_k$}{
			\If{$G=\langle\supp(S)\rangle$\label{line7}}{
				$dcdstar\_found\gets\textsf{true}$\;
             \Break\;
			}
		}
	}
	
	\Return $k-1$\label{line13}\;
	
	\BlankLine
	
\caption{DirectedCayleyDiameterLowerBound($G$)} \label{directedcayleydiameteralgorithm}
\end{algorithm}
\medskip

The algorithm begins by calling (the slightly modified) $\mathrm{GeodesicLargeDavenport}(G)$, which in turn provides the value for $\GD{G}$ and all the sets $\mathcal{GR}_k$ for $1\leq k\leq \GD{G}$ (in line \ref{line1}). Then a simple implementation of  Definition \ref{def:GD(G)} (ii) follows. 
\begin{remark}
The condition in line \ref{line7} can be implemented in GAP (see \cite{GAP4}) using the $\mathsf{Group(gens)}$ command, where $\mathsf{gens}$ is a list of generators. Another possibility would be to use the sets $\mathcal{GA}_k$ to decide whether $\supp(S)$ generates $G$ 
by Proposition~\ref{prop:generators}. 
\end{remark}

\section{Results of computer calculations} \label{sec:calculations} 

In the table below we give 
$\ddiamstar{G}$, $\d{G}$, $\GD{G}$, 
$\D{G}$ for all non-abelian groups of order at most $42$ and the alternating group of degree $5$. In addition, we also give $\ddiam{G}$ for all these groups.  
In several cases it happens that $\ddiamstar{G}=\ddiam{G}=\GD{G}-1$ 
(see Corollary~\ref{prop:monotone cayley diameter}). In all other cases we used GAP to compute directly the value of the directed Cayley diameter, by iterating over all possible irredundant generating systems of the group (up to automorphism) and computing the diameter of the associated directed Cayley graph using standard GAP functions available in the ``Digraphs'' and ``GRAPE'' packages.  
Moreover, in the last row of the table we give 
$\ddiam{G}$ and $\d{G}$ for the smallest non-abelian simple group $G=A_5$ (the alternating group of degree $5$). 

The first column contains the identifier of $G$ in the Small Groups Library of GAP (see \cite{GAP4}); that is, a pair of the form \texttt{[order, i]}, where the GAP command $\mathsf{SmallGroup(id)}$  returns the $i$-th group of order $order$ in the catalogue.

The second column contains the ``structure description'' string of the group, as returned by the $\mathsf{StructureDescription(G)}$ command.

\renewcommand*{\arraystretch}{1.2}
\begin{longtable}[c]{c|c|c|c|c|c|c}
id & Struct. desc. & $\ddiamstar{G}$ & $\ddiam{G}$ & $\d{G}$ & $\GD{G}$ & $\D{G}$\tabularnewline
\hline 
\texttt{{[}\ 6,\,\ 1{]}} & S3 & 3 & 3 & 3 & 4 & 6\tabularnewline
\rowcolor{TableGray}
\texttt{{[}\ 8,\,\ 3{]}} & D8 & 4 & 4 & 4 & 5 & 6\tabularnewline
\texttt{{[}\ 8,\,\ 4{]}} & Q8 & 3 & 3 & 4 & 4 & 6\tabularnewline
\rowcolor{TableGray}
\texttt{{[}10,\,\ 1{]}} & D10 & 5 & 5 & 5 & 6 & 10\tabularnewline
\texttt{{[}12,\,\ 1{]}} & C3 : C4 & 5 & 5 & 6 & 6 & 9\tabularnewline
\rowcolor{TableGray}
\texttt{{[}12,\,\ 3{]}} & A4 & 4 & 4 & 4 & 5 & 7\tabularnewline
\texttt{{[}12,\,\ 4{]}} & D12 & 6 & 6 & 6 & 7 & 9\tabularnewline
\rowcolor{TableGray}
\texttt{{[}14,\,\ 1{]}} & D14 & 7 & 7 & 7 & 8 & 14\tabularnewline
\texttt{{[}16,\,\ 3{]}} & (C4 x C2) : C2 & 6 & 6 & 5 & 7 & 7\tabularnewline
\rowcolor{TableGray}
\texttt{{[}16,\,\ 4{]}} & C4 : C4 & 6 & 6 & 6 & 7 & 8\tabularnewline
\texttt{{[}16,\,\ 6{]}} & C8 : C2 & 6 & 6 & 8 & 8 & 10\tabularnewline
\rowcolor{TableGray}
\texttt{{[}16,\,\ 7{]}} & D16 & 8 & 8 & 8 & 9 & 12\tabularnewline
\texttt{{[}16,\,\ 8{]}} & QD16 & 5 & 5 & 8 & 8 & 12\tabularnewline
\rowcolor{TableGray}
\texttt{{[}16,\,\ 9{]}} & Q16 & 5 & 5 & 8 & 8 & 12\tabularnewline
\texttt{{[}16,\,11{]}} & C2 x D8 & 5 & 5 & 5 & 6 & 7\tabularnewline
\rowcolor{TableGray}
\texttt{{[}16,\,12{]}} & C2 x Q8 & 4 & 4 & 5 & 5 & 7\tabularnewline
\texttt{{[}16,\,13{]}} & (C4 x C2) : C2 & 4 & 4 & 5 & 5 & 7\tabularnewline
\rowcolor{TableGray}
\texttt{{[}18,\,\ 1{]}} & D18 & 9 & 9 & 9 & 10 & 18\tabularnewline
\texttt{{[}18,\,\ 3{]}} & C3 x S3 & 7 & 7 & 7 & 8 & 10\tabularnewline
\rowcolor{TableGray}
\texttt{{[}18,\,\ 4{]}} & (C3 x C3) : C2 & 4 & 4 & 5 & 5 & 10\tabularnewline
\texttt{{[}20,\,\ 1{]}} & C5 : C4 & 7 & 7 & 10 & 10 & 15\tabularnewline
\rowcolor{TableGray}
\texttt{{[}20,\,\ 3{]}} & C5 : C4 & 6 & 6 & 7 & 7 & 10\tabularnewline
\texttt{{[}20,\,\ 4{]}} & D20 & 10 & 10 & 10 & 11 & 15\tabularnewline
\rowcolor{TableGray}
\texttt{{[}21,\,\ 1{]}} & C7 : C3 & 5 & 5 & 8 & 7 & 14\tabularnewline
\texttt{{[}22,\,\ 1{]}} & D22 & 11 & 11 & 11 & 12 & 22\tabularnewline
\rowcolor{TableGray}
\texttt{{[}24,\,\ 1{]}} & C3 : C8 & 9 & 9 & 12 & 12 & 15\tabularnewline
\texttt{{[}24,\,\ 3{]}} & SL(2,3) & 6 & 6 & 7 & 7 & 13\tabularnewline
\rowcolor{TableGray}
\texttt{{[}24,\,\ 4{]}} & C3 : Q8 & 7 & 7 & 12 & 12 & 18\tabularnewline
\texttt{{[}24,\,\ 5{]}} & C4 x S3 & 8 & 8 & 12 & 12 & 15\tabularnewline
\rowcolor{TableGray}
\texttt{{[}24,\,\ 6{]}} & D24 & 12 & 12 & 12 & 13 & 18\tabularnewline
\texttt{{[}24,\,\ 7{]}} & C2 x (C3 : C4) & 8 & 8 & 8 & 9 & 11\tabularnewline
\rowcolor{TableGray}
\texttt{{[}24,\,\ 8{]}} & (C6 x C2) : C2 & 7 & 7 & 7 & 8 & 14\tabularnewline
\texttt{{[}24,\,10{]}} & C3 x D8 & 8 & 8 & 12 & 12 & 14\tabularnewline
\rowcolor{TableGray}
\texttt{{[}24,\,11{]}} & C3 x Q8 & 8 & 8 & 12 & 12 & 14\tabularnewline
\texttt{{[}24,\,12{]}} & S4 & 7 & 7 & 6 & 8 & 12\tabularnewline
\rowcolor{TableGray}
\texttt{{[}24,\,13{]}} & C2 x A4 & 7 & 7 & 7 & 8 & 10\tabularnewline
\texttt{{[}24,\,14{]}} & C2 x C2 x S3 & 7 & 7 & 7 & 8 & 10\tabularnewline
\rowcolor{TableGray}
\texttt{{[}26,\,\ 1{]}} & D26 & 13 & 13 & 13 & 14 & 26\tabularnewline
\texttt{{[}27,\,\ 3{]}} & (C3 x C3) : C3 & 6 & 6 & 6 & 7 & 8\tabularnewline
\rowcolor{TableGray}
\texttt{{[}27,\,\ 4{]}} & C9 : C3 & 6 & 6 & 10 & 9 & 12\tabularnewline
\texttt{{[}28,\,\ 1{]}} & C7 : C4 & 9 & 9 & 14 & 14 & 21\tabularnewline
\rowcolor{TableGray}
\texttt{{[}28,\,\ 3{]}} & D28 & 14 & 14 & 14 & 15 & 21\tabularnewline
\texttt{{[}30,\,\ 1{]}} & C5 x S3 & 11 & 11 & 15 & 15 & 18\tabularnewline
\rowcolor{TableGray}
\texttt{{[}30,\,\ 2{]}} & C3 x D10 & 9 & 9 & 15 & 15 & 20\tabularnewline
\texttt{{[}30,\,\ 3{]}} & D30 & 15 & 15 & 15 & 16 & 30\tabularnewline
\rowcolor{TableGray}
\texttt{{[}32,\,\ 2{]}} & (C4 x C2) : C4 & 8 & 8 & 7 & 9 & 9\tabularnewline
\texttt{{[}32,\,\ 4{]}} & C8 : C4 & 7 & 7 & 10 & 9 & 12\tabularnewline
\rowcolor{TableGray}
\texttt{{[}32,\,\ 5{]}} & (C8 x C2) : C2 & 10 & 10 & 9 & 11 & 11\tabularnewline
\texttt{{[}32,\,\ 6{]}} & (C2 x C2 x C2) : C4 & 8 & 8 & 7 & 9 & 10\tabularnewline
\rowcolor{TableGray}
\texttt{{[}32,\,\ 7{]}} & (C8 : C2) : C2 & 7 & 7 & 9 & 8 & 12\tabularnewline
\texttt{{[}32,\,\ 8{]}} & (C2 x C2) . (C4 x C2) & 7 & 7 & 9 & 8 & 12\tabularnewline
\rowcolor{TableGray}
\texttt{{[}32,\,\ 9{]}} & (C8 x C2) : C2 & 10 & 10 & 9 & 11 & 13\tabularnewline
\texttt{{[}32,\,10{]}} & Q8 : C4 & 7 & 7 & 9 & 9 & 13\tabularnewline
\rowcolor{TableGray}
\texttt{{[}32,\,11{]}} & (C4 x C4) : C2 & 8 & 8 & 9 & 9 & 14\tabularnewline
\texttt{{[}32,\,12{]}} & C4 : C8 & 10 & 10 & 10 & 11 & 12\tabularnewline
\rowcolor{TableGray}
\texttt{{[}32,\,13{]}} & C8 : C4 & 7 & 7 & 10 & 9 & 14\tabularnewline
\texttt{{[}32,\,14{]}} & C8 : C4 & 10 & 10 & 10 & 11 & 14\tabularnewline
\rowcolor{TableGray}
\texttt{{[}32,\,15{]}} & C4 . D8 = C4 . (C4 x C2) & 8 & 8 & 10 & 9 & 14\tabularnewline
\texttt{{[}32,\,17{]}} & C16 : C2 & 10 & 10 & 16 & 16 & 18\tabularnewline
\rowcolor{TableGray}
\texttt{{[}32,\,18{]}} & D32 & 16 & 16 & 16 & 17 & 24\tabularnewline
\texttt{{[}32,\,19{]}} & QD32 & 9 & 9 & 16 & 16 & 24\tabularnewline
\rowcolor{TableGray}
\texttt{{[}32,\,20{]}} & Q32 & 9 & 9 & 16 & 16 & 24\tabularnewline
\texttt{{[}32,\,22{]}} & C2 x ((C4 x C2) : C2) & 7 & 7 & 6 & 8 & 8\tabularnewline
\rowcolor{TableGray}
\texttt{{[}32,\,23{]}} & C2 x (C4 : C4) & 7 & 7 & 7 & 8 & 9\tabularnewline
\texttt{{[}32,\,24{]}} & (C4 x C4) : C2 & 6 & 6 & 7 & 7 & 9\tabularnewline
\rowcolor{TableGray}
\texttt{{[}32,\,25{]}} & C4 x D8 & 7 & 7 & 7 & 8 & 9\tabularnewline
\texttt{{[}32,\,26{]}} & C4 x Q8 & 6 & 6 & 7 & 7 & 9\tabularnewline
\rowcolor{TableGray}
\texttt{{[}32,\,27{]}} & (C2 x C2 x C2 x C2) : C2 & 6 & 6 & 6 & 7 & 9\tabularnewline
\texttt{{[}32,\,28{]}} & (C4 x C2 x C2) : C2 & 6 & 6 & 7 & 7 & 10\tabularnewline
\rowcolor{TableGray}
\texttt{{[}32,\,29{]}} & (C2 x Q8) : C2 & 6 & 6 & 7 & 7 & 10\tabularnewline
\texttt{{[}32,\,30{]}} & (C4 x C2 x C2) : C2 & 6 & 6 & 7 & 7 & 10\tabularnewline
\rowcolor{TableGray}
\texttt{{[}32,\,31{]}} & (C4 x C4) : C2 & 6 & 6 & 7 & 7 & 10\tabularnewline
\texttt{{[}32,\,32{]}} & (C2 x C2) . (C2 x C2 x C2) & 5 & 5 & 7 & 7 & 10\tabularnewline
\rowcolor{TableGray}
\texttt{{[}32,\,33{]}} & (C4 x C4) : C2 & 5 & 5 & 7 & 7 & 10\tabularnewline
\texttt{{[}32,\,34{]}} & (C4 x C4) : C2 & 6 & 6 & 7 & 7 & 10\tabularnewline
\rowcolor{TableGray}
\texttt{{[}32,\,35{]}} & C4 : Q8 & 6 & 6 & 7 & 7 & 10\tabularnewline
\texttt{{[}32,\,37{]}} & C2 x (C8 : C2) & 7 & 7 & 9 & 9 & 11\tabularnewline
\rowcolor{TableGray}
\texttt{{[}32,\,38{]}} & (C8 x C2) : C2 & 7 & 7 & 9 & 9 & 11\tabularnewline
\texttt{{[}32,\,39{]}} & C2 x D16 & 9 & 9 & 9 & 10 & 13\tabularnewline
\rowcolor{TableGray}
\texttt{{[}32,\,40{]}} & C2 x QD16 & 6 & 6 & 9 & 9 & 13\tabularnewline
\texttt{{[}32,\,41{]}} & C2 x Q16 & 6 & 6 & 9 & 9 & 13\tabularnewline
\rowcolor{TableGray}
\texttt{{[}32,\,42{]}} & (C8 x C2) : C2 & 6 & 6 & 9 & 9 & 13\tabularnewline
\texttt{{[}32,\,43{]}} & C8 : (C2 x C2) & 6 & 6 & 9 & 9 & 12\tabularnewline
\rowcolor{TableGray}
\texttt{{[}32,\,44{]}} & (C2 x Q8) : C2 & 5 & 5 & 9 & 8 & 12\tabularnewline
\texttt{{[}32,\,46{]}} & C2 x C2 x D8 & 6 & 6 & 6 & 7 & 8\tabularnewline
\rowcolor{TableGray}
\texttt{{[}32,\,47{]}} & C2 x C2 x Q8 & 5 & 5 & 6 & 6 & 8\tabularnewline
\texttt{{[}32,\,48{]}} & C2 x ((C4 x C2) : C2) & 5 & 5 & 6 & 6 & 8\tabularnewline
\rowcolor{TableGray}
\texttt{{[}32,\,49{]}} & (C2 x C2 x C2) : (C2 x C2) & 4 & 4 & 6 & 6 & 8\tabularnewline
\texttt{{[}32,\,50{]}} & (C2 x Q8) : C2 & 4 & 4 & 6 & 5 & 8\tabularnewline
\rowcolor{TableGray}
\texttt{{[}34,\,\ 1{]}} & D34 & 17 & 17 & 17 & 18 & 34\tabularnewline
\texttt{{[}36,\,\ 1{]}} & C9 : C4 & 11 & 11 & 18 & 18 & 27\tabularnewline
\rowcolor{TableGray}
\texttt{{[}36,\,\ 3{]}} & (C2 x C2) : C9 & 10 & 10 & 10 & 11 & 13\tabularnewline
\texttt{{[}36,\,\ 4{]}} & D36 & 18 & 18 & 18 & 19 & 27\tabularnewline
\rowcolor{TableGray}
\texttt{{[}36,\,\ 6{]}} & C3 x (C3 : C4) & 13 & 13 & 13 & 14 & 16\tabularnewline
\texttt{{[}36,\,\ 7{]}} & (C3 x C3) : C4 & 6 & 6 & 8 & 8 & 13\tabularnewline
\rowcolor{TableGray}
\texttt{{[}36,\,\ 9{]}} & (C3 x C3) : C4 & 8 & 8 & 7 & 9 & 12\tabularnewline
\texttt{{[}36,\,10{]}} & S3 x S3 & 8 & 8 & 8 & 9 & 14\tabularnewline
\rowcolor{TableGray}
\texttt{{[}36,\,11{]}} & C3 x A4 & 8 & 8 & 8 & 9 & 11\tabularnewline
\texttt{{[}36,\,12{]}} & C6 x S3 & 10 & 10 & 10 & 11 & 13\tabularnewline
\rowcolor{TableGray}
\texttt{{[}36,\,13{]}} & C2 x ((C3 x C3) : C2) & 7 & 7 & 8 & 8 & 13\tabularnewline
\texttt{{[}38,\,\ 1{]}} & D38 & 19 & 19 & 19 & 20 & 38\tabularnewline
\rowcolor{TableGray}
\texttt{{[}39,\,\ 1{]}} & C13 : C3 & 7 & 7 & 14 & 13 & 26\tabularnewline
\texttt{{[}40,\,\ 1{]}} & C5 : C8 & 11 & 11 & 20 & 20 & 25\tabularnewline
\rowcolor{TableGray}
\texttt{{[}40,\,\ 3{]}} & C5 : C8 & 10 & 10 & 12 & 11 & 15\tabularnewline
\texttt{{[}40,\,\ 4{]}} & C5 : Q8 & 11 & 11 & 20 & 20 & 30\tabularnewline
\rowcolor{TableGray}
\texttt{{[}40,\,\ 5{]}} & C4 x D10 & 12 & 12 & 20 & 20 & 25\tabularnewline
\texttt{{[}40,\,\ 6{]}} & D40 & 20 & 20 & 20 & 21 & 30\tabularnewline
\rowcolor{TableGray}
\texttt{{[}40,\,\ 7{]}} & C2 x (C5 : C4) & 12 & 12 & 12 & 13 & 17\tabularnewline
\texttt{{[}40,\,\ 8{]}} & (C10 x C2) : C2 & 11 & 11 & 11 & 12 & 22\tabularnewline
\rowcolor{TableGray}
\texttt{{[}40,\,10{]}} & C5 x D8 & 12 & 12 & 20 & 20 & 22\tabularnewline
\texttt{{[}40,\,11{]}} & C5 x Q8 & 12 & 12 & 20 & 20 & 22\tabularnewline
\rowcolor{TableGray}
\texttt{{[}40,\,12{]}} & C2 x (C5 : C4) & 9 & 9 & 12 & 11 & 15\tabularnewline
\texttt{{[}40,\,13{]}} & C2 x C2 x D10 & 11 & 11 & 11 & 12 & 16\tabularnewline
\rowcolor{TableGray}
\texttt{{[}42,\,\ 1{]}} & C7 : C6 & 8 & 8 & 11 & 9 & 14\tabularnewline
\texttt{{[}42,\,\ 2{]}} & C2 x (C7 : C3) & 8 & 8 & 15 & 14 & 21\tabularnewline
\rowcolor{TableGray}
\texttt{{[}42,\,\ 3{]}} & C7 x S3 & 15 & 15 & 21 & 21 & 24\tabularnewline
\texttt{{[}42,\,\ 4{]}} & C3 x D14 & 11 & 11 & 21 & 21 & 28\tabularnewline
\rowcolor{TableGray}
\texttt{{[}42,\,\ 5{]}} & D42 & 21 & 21 & 21 & 22 & 42\tabularnewline
\texttt{{[}60,\,\ 5{]}} & A5 & 12 & 12 & 8 & 13 & -\tabularnewline

\end{longtable}

\begin{example} 
The table above shows that 
for $3\le n\le 21$ the dihedral group $D_{2n}$ has directed Cayley diameter $n$. It is quite easy to see 
that $\ddiam{D_{2n}}\ge n$ for all $n\ge 3$. Indeed, $D_{2n}$ has the presentation with generators and relations as 
$D_{2n}=\langle a,b\mid a^2=b^2=(ab)^n=1\rangle$. 
Now when $n$ is even, then 
$\length_{\{a,b\}}((ab)^{n/2})=n$, 
and when $n$ is odd, 
then 
$\length_{\{a,b\}}((ab)^{(n-1)/2}a)=n$. 
\end{example}

\begin{remark}\label{remark:monotonity} The quantities $\D{G}$, 
$\d{G}$, $\GD{G}$ are obviously monotone in  the sense that for any subgroup $H$ of $G$ we have 
$\D{H}\le \D{G}$, $\d{H}\le \d{G}$, 
$\GD{H}\le \GD{G}$. 
The quantity $\ddiam{G}$ is also monotone for abelian groups $G$ by 
\eqref{eq:d*(G)} and Theorem~\ref{thm:diam=d*}. 
On the other hand, the quantity $\ddiam{G}$ is not monotone for all finite groups $G$. 
This follows from some known cases of  Babai's Conjecture from \cite{babai-seress} mentioned in the Introduction,  
because a non-abelian simple group 
may contain a ``large'' cyclic subgroup,  having ``large'' directed Cayley diameter. 
The above table provides small examples, e.g. 
if $G$ is the order $16$ group with GAP identifier $\texttt{[16,6]}$, then $\ddiam{G}=6$, and $G$ has an order $8$ cyclic subgroup $H$, for which $\ddiam{H}=7$.  
\end{remark}

\begin{remark}
The equality $\ddiam{G}=\ddiamstar{G}$ 
holds for all abelian groups $G$ by 
Theorem~\ref{thm:diam=d*}, and for all 
non-abelian groups of order at most $42$, as one can observe inspecting the above table.  We do not know whether the equality 
$\ddiam{G}=\ddiamstar{G}$ holds for all finite groups $G$. 
\end{remark}

\begin{remark}
    The computer calculations were performed on a server with \qty{3.6}{\GHz} processors and \qty{1}{\tera\byte} of RAM memory, running single-threaded implementations in GAP of Algorithm~\ref{geodesiclargedavenportalgorithm}, Algorithm~\ref{directedcayleydiameteralgorithm}, respectively the algorithms computing the small and large Davenport constants given in \cite{cz-d-sz}. 
    
    The results for the groups of order at most $32$ have been obtained in $30$ hours of computing time. The results for the remaining groups up to order $39$ have been obtained in about two weeks' time. Computations for groups of order $40$ and $42$ took about three months, the group with GAP identifier $[40, 8]$ being the most time-consuming (computation for this group alone took almost 64 days).
    
    Computing the large Davenport constant (using Algorithm 2 from \cite{cz-d-sz}) was the most resource-consuming (in terms of computing-time and memory). Unfortunately, the algorithm did not finish in time to provide the value for $\D{A_5}$. We are working on a parallel implementation and further optimizations to compute all these constants for groups of order above $42$.
\end{remark}


\section{Some known results on the Davenport constants} \label{sec:survey}

The small Davenport constant for non-abelian groups was introduced in \cite{olson-white}, where it was shown that the inequality $\d{G}\le\frac 12|G|$ holds for any non-cyclic group $G$, with equality when $G$ has a cyclic index $2$ subgroup. 
The large Davenport constant was introduced in \cite{geroldinger-grynkiewicz}, where it was proved that 
if $G$ has a cyclic index $2$ subgroup, then 
$\D{G}=\d{G}+|G'|$;  
here $G'$ stands for the commutator subgroup of $G$. 
Bounds on $\d{G}$ in terms of $|G|$ and the smallest prime divisor $p$ of $|G|$ were given in a sequence of papers (see \cite{grynkiewicz}, \cite{gao-li-peng}). 
To the best of our knowledge, the strongest published bound of this type is given in \cite{qu-li-teeuwsen}, 
where it is proved that 
$\d{G}\le |G|/p+p-2$. 


\end{document}